\documentclass[12pt,reqno]{amsart}
\usepackage[cp1251]{inputenc}
\usepackage[T2A]{fontenc}
\usepackage[english]{babel}
\usepackage{geometry}

\usepackage{amsmath, amsthm, amscd, amsfonts, amssymb, graphicx, color}
\usepackage[bookmarksnumbered, colorlinks, plainpages]{hyperref}

\numberwithin{equation}{section}

\newtheorem{theorem}{Theorem}[section]

\newtheorem{lemma}[theorem]{Lemma}
\newtheorem{proposition}[theorem]{Proposition}

\theoremstyle{remark}
\newtheorem{remark}[theorem]{Remark}

\theoremstyle{definition}

\newtheorem*{main-definition}{Main Definition}
\newtheorem{condition}[theorem]{Condition}

\begin{document}

\title[Matrix parabolic problems]{Matrix parabolic problems in\\ Sobolev spaces of generalized smoothness}


\author[V. Los]{Valerii Los}

\address{National Technical University of Ukraine "Igor Sikorsky Kyiv Polytechnic Institute", Prospect Peremohy 37, 03056, Kyiv, Ukraine}

\email{v\_los@yahoo.com}


\author[V. Mikhailets]{Vladimir  Mikhailets}

\address{King's College London, Strand, London, WC2R 2LS, United Kingdom; Institute of Mathematics, National Academy of Sciences of Ukraine, 3 Tereshchenkivs'ka, Kyiv, 01024, Ukraine}

\email{mikhailets@imath.kiev.ua}


\author[A. Murach]{Aleksandr Murach}

\address{Institute of Mathematics of the National Academy of Sciences of Ukraine, 3 Tereshchen\-kivs'ka, Kyiv, 01024, Ukraine}

\email{murach@imath.kiev.ua}

\subjclass[2000]{Primary: 35K35; Secondary: 46E35.}

\keywords{Parabolic problem, parabolic system, Sobolev space, H\"ormander space, generalized smoothness, slowly varying function, isomorphism property, regularity of solutions, quadratic interpolation between spaces.}

\begin{abstract}
We study a general linear parabolic problem for Petrovskii parabolic differential system in Sobolev anisotropic distribution spaces of generalized smoothness. Slowly varying functions are used to characterize supplementary generalized smoothness that cannot be determined by number indexes. We prove that this problem induces topological isomorphisms on appropriate pairs of such spaces. As an application, we give sufficient and necessary conditions for the problem solutions to have prescribed generalized regularity expressed in terms of these spaces. Their use allows obtaining exact conditions for indicated generalized partial derivatives of the solutions to be continuous.
\end{abstract}

\maketitle

\section{Introduction}\label{sec-intr}

Parabolic differential equations, along with hyperbolic ones, form one of the two most important classes of evolution partial differential equations (PDEs). They describe diffusion processes and are closely related to semigroup theory and theory of stochastic processes \cite{Amann95, Barbu22, Casteren11, LorenziRhandi21, Lunardi95}. Therefore, the study of conditions for existence, uniqueness, and regularity of solutions to various parabolic equations is of great interest for applications. As is known \cite{AgranovichVishik64, Eidelman94, EidelmanZhitarashu98, LadyzhenskajaSolonnikovUraltzeva67, LionsMagenes72ii, Solonnikov65}, parabolic initial-boundary-value problems are well posed (in the sense of Hadamard) on appropriate pairs of certain anisotropic Sobolev spaces and H\"older spaces. This property plays a key role in the study of regularity of solutions to parabolic problems, their Green functions, controllability and inverse problems for parabolic systems, parabolic problems with dynamic boundary conditions or stochastic data, some nonlinear parabolic problems, and others (see., e.g., \cite{DenkPrussZacher08, Eidelman94, Ivasyshen90, LadyzhenskajaSolonnikovUraltzeva67, LeBrisLions19, LionsMagenes72ii, Lunardi95, SchnaubeltVeraar11}. In recent years, new classes of function/distribution spaces have been involved in the theory of parabolic problems, which is stimulated by their various applications. Such spaces characterize the regularity of distributions more finely (than Sobolev and H\"older spaces) by means of supplementary smoothness (or differentiability) indexes, integrability exponents, weighted functions, and their combinations \cite{DenkHieberPruss07, DongKim15, Hummel21JEE, HummelLindemulder22, Lindemulder20, LindemulderVeraar20, LosMikhailetsMurach17CPAA, LosMikhailetsMurach21CPAA, Weidemaier05}. For instance, the use of certain functions as a smoothness index for Sobolev spaces leads to more subtle sufficient conditions for generalized solutions of parabolic problems to have prescribed classical differentiability \cite{DiachenkoLos23UMJ8, DiachenkoLos25JMathSci, Los16UMJ11, LosMikhailetsMurach21CPAA}.

This article studies general linear matrix parabolic problems in Sobolev-type distribution spaces parameterized by three smoothness indexes. The first two indexes are real numbers that characterize the different smoothness of distributions in spatial variables and in time (as in the case of classical anisotropic Sobolev spaces). The third index is a slowly varying function and characterizes supplementary generalized smoothness that cannot be determined by number indexes. For example, the logarithmic function, its arbitrary powers and iterations, and their products are admissible as this index. To focus on new effects caused by generalized smoothness, we restrict the study to Hilbert spaces based on the Lebesgue space of square integrable functions. This is also stipulated by our key investigation method, the quadratic interpolation (with function parameter) between  Hilbert spaces.

This article finalizes a series of our papers devoted to properties of various classes of parabolic problems in Sobolev spaces of generalized smoothness \cite{DiachenkoLos22JEPE, Los15UMJ5, Los16UMJ6, Los16UMJ9, Los17UMJ3, LosMurach13MFAT2, LosMurach17OpenMath, LosMikhailetsMurach17CPAA, LosMikhailetsMurach21CPAA, LosMikhailetsMurach23Monograph}. We will prove that general matrix parabolic problems are well posed on appropriate pairs of such spaces and give applications of this result to the study of conditions for generalized or classical regularity of solutions to these problems. It is remarkable that the use of a slowly varying function as the supplementary smoothness index for Sobolev spaces allows us to achieve the minimal values of the number smoothness indexes in conditions for classical differentiability.

Note that the first spaces of generalized smoothness (characterized by a function parameter) were introduced by Malgrange \cite{Malgrange57}
and H\"ormander \cite[Section~2.2]{Hermander63} for the purpose of application to partial differential equations. Sobolev spaces of generalized smoothness have found various applications to elliptic differential operators and elliptic boundary-value problems (see monograph \cite{MikhailetsMurach14}, surveys \cite{MikhailetsMurach12BJMA2, MilkhailetsMurachChepurukhina23UMJ}, and recent articles \cite{AnopDenkMurach21CPAA, AnopChepurukhinaMurach21Axioms, AnopMurach26BJMA, Faierman20, MikhailetsMurach24ProcEdin, MikhailetsMurachZinchenko25AdvMath, Milatovic24, MurachZinchenko25CMP}), oblique derivative problem \cite{Paneah00}, nonlocal boundary-value problems \cite{IlkivStrap15, IlkivStrapVolyanska20, IlkivStrapVolyanska23}, integral equations \cite{MazyaShaposhnikova09, Miyazaki91}, stochastic processes \cite{Jacob010205}, theory of approximation \cite{Stepanets05}. As for applications to some parabolic integro-differential equations, we refer to \cite{MikuleviciusPhonsom19, MikuleviciusPhonsom21}.

The article consists of 7 sections. Section~\ref{sec-intr} is Introduction. The statement of the  matrix parabolic problem under study is given in Section~\ref{sec-prob-state}. We investigate a general linear parabolic problem for Petrovskii parabolic differential system given in a bounded  multidimensional cylinder of smooth cross-section. Such systems are featured by the property that the orders of time derivatives are multiplied by an even number $2b$ when calculating the order $2b\varkappa_{k}$ of partial differential operators acting on $k$-th unknown scalar function. Section~\ref{sec-spaces} is devoted to anisotropic Sobolev spaces of generalized smoothness formed by solutions and right-hand sides of the parabolic system and to their versions containing the boundary and initial data of the problem. These spaces are based on those introduced by H\"ormander \cite[Section~2.2]{Hermander63}. Section~\ref{sec-operator} discusses compatibility conditions for the right-hand sides of the parabolic problem. These conditions are necessary to describe the range of the operator induced by the problem. Main results are formulated in Section~\ref{sec-results}. The central result, Theorem~\ref{25th4.1}, states that the parabolic problem under study induces topological isomorphisms on appropriate pairs of the above-mentioned Sobolev spaces of generalized smoothness, i.e. the problem is well posed on these pairs. As applications of this result, Theorems \ref{25th4.2} and \ref{25th4.3} give sufficient conditions for the problem solutions to have prescribed global or local regularity expressed in terms of Sobolev spaces of generalized smoothness, these conditions being also necessary. The use of such spaces allows obtaining exact conditions for indicated generalized partial derivatives of these solutions to be continuous on a given set. These conditions are formulated as Theorem~\ref{25th4.4}. Section~\ref{sec-auxiliary-results} discusses the method of quadratic interpolation with function parameter between Hilbert spaces and relevant interpolation properties of Sobolev and abstract Hilbert spaces. This method plays a key role in our proof of the central result. We also present an anisotropic version of H\"ormander embedding theorem for spaces of generalized smoothness. We need it to prove Theorem~\ref{25th4.4}. The proofs are given in Section~\ref{sec-proofs}.

\section{The problem statement}\label{sec-prob-state}

Let $2\leq n\in\mathbb{Z}$ and $0<\tau\in\mathbb{R}$. Assume that $G$ is a bounded open domain in $\mathbb{R}^{n}$ with infinitely smooth boundary $\Gamma:=\partial G$. Put $\Omega:=G\times(0,\tau)$ and $S:=\Gamma\times(0,\tau)$; thus, $\Omega$ is an open cylinder in $\mathbb{R}^{n+1}$, and $S$ is its lateral surface.
As usual, $G$ is identified with the lower base of~$\Omega$.

In $\Omega$, we consider a general Petrovskii parabolic problem consisting of $N$ PDEs
\begin{equation}\label{4f1}
\begin{gathered}
\sum_{k=1}^{N}A_{j,k}(x,t,D_x,\partial_t)u_{k}(x,t)=f_{j}(x,t)\\
\mbox{whenever $x\in\Omega$, $0<t<\tau$, and $j\in\{1,\dots,N\}$},
\end{gathered}
\end{equation}
boundary conditions
\begin{equation}\label{4f2}
\begin{gathered}
\sum_{k=1}^{N}B_{j,k}(x,t,D_x,\partial_t)u_{k}(x,t)=g_{j}(x,t)\\
\mbox{whenever $x\in\Gamma$, $0<t<\tau$, and $j\in\{1,\dots,m\}$},
\end{gathered}
\end{equation}
and initial conditions
\begin{equation}\label{4f3}
\begin{gathered}
\partial^{\,r}_t u_{k}(x,t)
\big|_{t=0}=h_{k,r}(x)\quad\mbox{whenever $x\in G$,}\\
\mbox{$k\in\{1,\ldots,N\}$ and $r\in\{0,\ldots,\varkappa_k-1\}$}.
\end{gathered}
\end{equation}
Here,
\begin{equation}\label{4f4}
A_{j,k}:=A_{j,k}(x,t,D_x,\partial_t):=
\sum_{|\alpha|+2b\beta\leq 2b\varkappa_{k}}
a^{\alpha,\beta}_{j,k}(x,t)\,D^\alpha_x\partial^\beta_t
\end{equation}
and
\begin{equation}\label{4f5}
B_{j,k}:=B_{j,k}(x,t,D_x,\partial_t):=
\sum\limits_{|\alpha|+2b\beta\leq l_j+2b\varkappa_{k}}
b^{\alpha,\beta}_{j,k}(x,t)\,D^\alpha_x\partial^\beta_t
\end{equation}
for all admissible values of $j$ and $k$, with $B_{j,k}=0$ if $l_j+2b\varkappa_{k}<0$. Stating this problem, we arbitrarily choose natural numbers $N\geq2$, $b$, and $\varkappa_{1},\ldots,\varkappa_{N}$, put $m:=b(\varkappa_{1}+\dots+\varkappa_N)$, and then arbitrarily choose $m$ integers $l_1,\ldots,l_m$. The number $2b$ is said to be the parabolic weight of the problem. All coefficients of the linear partial differential operators (PDOs) $A_{j,k}$ and $B_{j,k}$ satisfy $a_{j,k}^{\alpha,\beta}\in
C^{\infty}(\overline{\Omega})$ and $b_{j,k}^{\alpha,\beta}\in C^{\infty}(\overline{S})$, where $\overline{\Omega}=\overline{G}\times[0,\tau]$ and
$\overline{S}=\Gamma\times[0,\tau]$. We consider complex-valued functions and distributions and complex linear function/distribution spaces.

We use the following notations in \eqref{4f4} and \eqref{4f5}:
$D^\alpha_x:=D^{\alpha_1}_{1}\dots D^{\alpha_n}_{n}$,
$D_{k}:=i\,\partial/\partial{x_k}$, and $\partial_t:=\partial/\partial t$. Here, $x=(x_1,\ldots,x_n)$ is an arbitrary spatial point in $\mathbb{R}^{n}$, and $\alpha=(\alpha_1,...,\alpha_n)$ is a multi-index, with $|\alpha|:=~\alpha_1+...+\alpha_n$. The summation is performed with respect to all integers $\alpha_1,...,\alpha_n,\beta\geq0$ that satisfy the condition written under the summation symbol.

As indicated above, the initial-boundary-value problem \eqref{4f1}--\eqref{4f3} is Petrovskii parabolic in $\Omega$. This means that it satisfies three conditions (see, e.g., \cite[Chapter~1, Section~1]{Solonnikov65}) stated below. To formulate them, we use the principal symbols of PDOs \eqref{4f4} and \eqref{4f5}. These symbols are defined as follows:
\begin{equation*}
A^{\circ}_{j,k}(x,t,\xi,p):=\sum_{|\alpha|+2b\beta=2b\varkappa_{k}}
a^{\alpha,\beta}_{j,k}(x,t)\,\xi^\alpha p^\beta,
\end{equation*}
and
\begin{equation*}
B^{\circ}_{j,k}(x,t,\xi,p):=
\sum\limits_{|\alpha|+2b\beta=l_j+2b\varkappa_{k}}
b^{\alpha,\beta}_{j,k}(x,t)\,\xi^\alpha p^\beta,
\end{equation*}
with $B_{j,k}(x,t,\xi,p)\equiv0$ if $l_j+2b\varkappa_{k}<0$. They are homogeneous polynomials with respect to the set of the arguments
$\xi:=(\xi_{1},\ldots,\xi_{n})\in\mathbb{C}^{n}$ and $p\in\mathbb{C}$; as usual, $\xi^\alpha:=\xi_{1}^{\alpha_{1}}\ldots\xi_{n}^{\alpha_{n}}$. Define the matrices
\begin{equation*}
A^{\circ}(x,t,\xi,p):=\bigl(A^{\circ}_{j,k}(x,t,\xi,p)\bigr)_{j,k=1}^{N}
\end{equation*}
and
\begin{equation*}
B^{\circ}(x,t,\xi,p):=\bigl(B^{\circ}_{j,k}(x,t,\xi,p)\bigr)
_{\substack{j=1,\ldots,m\\k=1,\ldots,N}}.
\end{equation*}

\begin{condition}\label{9cond2.1sys}
For any $x\in\overline{G}$, $t\in[0,\tau]$ and $\xi\in\mathbb{R}^{n}$, all roots of the polynomial $\det A^{\circ}(x,t,\xi,p)$ in
$p\in\mathbb{C}$ satisfy $\mathrm{Re}\,p(x,t,\xi)\leq -\delta\,|\xi|^{2b}$ with a constant $\delta>0$ not depending on $x$, $t$, and $\xi$.
\end{condition}

\begin{condition}\label{9cond2.2sys}
Each equation \eqref{4f1} with $j\in\{1,\ldots,N\}$ is solvable with respect to the derivative $\partial^{\varkappa_j}_{t}u_{j}$ and does not contain any derivative
$\partial^{\varkappa_k}_{t}u_{k}$ with $k\neq j$. Then we may and do assume that
$a_{j,k}^{(0,\dots,0),\varkappa_{k}}(x,t)\equiv\delta_{j,k}$ is the Kronecker delta for all $j,k\in\{1,\dots,N\}$.
\end{condition}

Let $\delta_1\in(0,\delta)$, with $\delta$ appearing in Condition~\ref{9cond2.1sys}. To formulate the third condition, we arbitrarily choose a point $x\in\Gamma$, number $t\in[0,\tau]$, vector $\xi\in\mathbb{R}^{n}$ tangent to $\Gamma$ at $x$, and number $p\in\mathbb{C}$ subject to $\mathrm{Re}\,p\geq -\delta_1|\xi|^{2b}$ and
$|\xi|+|p|>0$. Let $\nu(x)$ be the unit vector of the inward normal to the boundary $\Gamma$ at $x$. It follows from Condition~\ref{9cond2.1sys} that the polynomial $\det A^{\circ}(x,t,\xi+\zeta\nu(x),p)$ in $\zeta\in\mathbb{C}$ has exactly $m$ roots $\zeta^{+}_{j}(x,t,\xi,p)$, $j=\nobreak1,\ldots,m$, with positive imaginary part and other $m$ roots with negative imaginary part (certainly, with regard for their multiplicity).

\begin{condition}\label{9cond2.3sys}
For a certain number $\delta_1\in(0,\delta)$ and for any $x$, $t$, $\xi$, and $p$ just indicated, the rows of the matrix
\begin{equation*}
B^{\circ}(x,t,\xi+\zeta\nu(x),p)\cdot
\widetilde{A}^{\circ}(x,t,\xi+\zeta\nu(x),p)
\end{equation*}
are linear independent modulo the polynomial $\prod_{j=1}^{m}(\zeta-\zeta^{+}_{j}(x,t,\xi,p))$ in $\zeta\in\mathbb{C}$. Here, $\widetilde{A}^{\circ}(x,t,\xi+\zeta\nu(x),p)$ is the transpose of the cofactor matrix of $A^{\circ}(x,t,\xi+\zeta\nu(x),p)$.
\end{condition}

Conditions \ref{9cond2.1sys} and \ref{9cond2.2sys} means that system \eqref{4f1} is uniformly $2b$-parabolic on the closed cylinder $\overline{\Omega}$ in the sense of Petrovskii \cite[p.~100]{Petrovskii86}. Condition~\ref{9cond2.3sys} means that the collection of boundary conditions \eqref{4f2} covers this system on the lateral surface $\overline{S}$ of the cylinder.

Let us write down \eqref{4f1} and \eqref{4f2} in the matrix form
$Au=f$ in $\Omega$, and  $Bu=g$ on $S$, resp. Here,
$$
A:=(A_{j,k}(x,t,D_x,\partial_t))_{j,k=1}^N
\quad\mbox{and}\quad
B:=\bigl(B_{j,k}(x,t,D_x,\partial_t)\bigr)_{\substack{j=1,\ldots,m
\\k=1,\ldots,N}}
$$
are matrix PDOs, whereas  $u=(u_1,\ldots,u_N)$, $f=(f_1,\ldots,f_N)$, and $g=(g_1,\ldots,g_m)$ are vector-valued functions, we interpreting vectors as columns. The collection of initial conditions \eqref{4f3} is written down in the form $Cu|_{t=0}=h$ on $G$, where
$$
Cu:=(u_{1}(x,t),\partial_t u_{1}(x,t),...,\partial^{\,\varkappa_1-1}_t u_{1}(x,t),...,u_{N}(x,t),\partial_t u_{N}(x,t), ...,\partial^{\,\varkappa_N-1}_t u_{N}(x,t)),
$$
and
$$
h:=(h_{1,0},h_{1,1},...,h_{1,\varkappa_1-1},...,h_{N,0},h_{N,1},...,
h_{N,\varkappa_N-1}).
$$

We associate the linear mapping
\begin{equation}\label{4f9}
u\mapsto\Lambda u:=
\bigl(Au,Bu,Cu\!\mid_{t=0}\bigr),\quad\mbox{where}\;\,u\in
\bigl(C^{\infty}(\overline{\Omega})\bigr)^{N}.
\end{equation}
with the problem \eqref{4f1}--\eqref{4f3}. We study an extension of this mapping on pairs of certain Sobolev spaces of generalized smoothness.

\section{Distribution spaces associated with the problem}\label{sec-spaces}

The above-mentioned spaces are considered in \cite[Section~3]{LosMikhailetsMurach21CPAA} and \cite[Section~1.2]{LosMikhailetsMurach23Monograph}, e.g. They are defined on the base of the anisotropic distribution space  $H^{s,s/(2b);\varphi}(\mathbb{R}^{k})$, which involves three smoothness indexes. The real numbers $s$ and $s/(2b)$ characterize the main smoothness with respect to the first $k-1$ variables (considered as spatial ones) and to the last $k$-th variable (interpreted as time), resp; here, the integer $k\geq2$. Moreover, the function $\varphi$ of a certain class $\mathcal{M}$ characterizes the subordinate generalized smoothness. This is given via the behaviour of the Fourier transform of distributions at infinity. Let us formulate necessary definitions.

By definition, the class $\mathcal{M}$ consists of all Borel measurable functions $\varphi:[1,\infty)\rightarrow(0,\infty)$ that satisfy the following two conditions: both functions $\varphi$ and $1/\varphi$ are bounded on every compact interval $[1,c]$ with $1<c<\infty$, and the function $\varphi$ varies slowly at infinity in the sense of Karamata, i.e.
$$
\lim_{y\rightarrow\infty}\frac{\varphi(\lambda y)}{\varphi(y)}=1 \quad\mbox{for each}\quad\lambda>0.
$$
The theory of slowly varying functions (at infinity) is set forth in monographs  \cite{BinghamGoldieTeugels89, BuldyginIndlekoferKlesovSteinebach18, Seneta76}. The standard  example of such functions is given by the function
\begin{equation*}
\varphi(y):=(\log y)^{\theta_{1}}\,(\log\log y)^{\theta_{2}} \ldots
(\,\underbrace{\log\ldots\log}_{k\;\mbox{\tiny{times}}}y\,)^{\theta_{k}}
\quad\mbox{of}\;\;y\gg1,
\end{equation*}
where the parameters $k\in\mathbb{Z}$, with $k\geq1$, and
$\theta_{1},\theta_{2},\ldots,\theta_{k}\in\mathbb{R}$ are arbitrarily chosen. (Putting here $\varphi(y):=1$ for other $r\geq1$, we get a function of class $\mathcal{M}$.)

Let $s\in\mathbb{R}$ and $\varphi\in\mathcal{M}$. By definition, the linear space $H^{s,s/(2b);\varphi}(\mathbb{R}^{k})$, with $k\geq2$, consists of all tempered distributions $w\in\mathcal{S}'(\mathbb{R}^{k})$ that their (complete) Fourier transform $\widetilde{w}$ is a locally Lebesgue integrable function over $\mathbb{R}^{k}$ and satisfies the condition
\begin{equation}\label{normR}
\begin{split}
&\|w,H^{s,s/(2b);\varphi}(\mathbb{R}^{k})\|:=\\
&\Biggl(\;\int\limits_{\mathbb{R}^{k-1}}\int\limits_{\mathbb{R}}
\bigl(1+|\xi|^2+|\eta|^{1/b}\bigr)^{s}
\,\varphi^{2}\bigl((1+|\xi|^2+|\eta|^{1/b})^{1/2}\bigr)\,
|\widetilde{w}(\xi,\eta)|^{2}\,d\xi\,d\eta\Biggr)^{1/2}<\infty,
\end{split}
\end{equation}
with $\xi\in\mathbb{R}^{k-1}$ and $\eta\in\mathbb{R}$ being frequency variables (dual to spatial variables) and time variable, resp.
This space is endowed with norm \eqref{normR} (and the inner product inducing this norm). It belongs to the classes of distribution spaces introduced and investigated by H\"ormander \cite[Section~2.2]{Hermander63} (see also \cite[Section~10.1]{Hermander83}) and by Volevich and Paneach \cite{VolevichPaneah65}. If $\varphi(\cdot)\equiv1$, then $H^{s,s/(2b);\varphi}(\mathbb{R}^{k})$ becomes anisotropic Sobolev space. In this case, we omit the function parameter $\varphi$ in designations of relevant spaces.

The space $H^{s,s/(2b);\varphi}(\mathbb{R}^{k})$ is Hilbert and separable. The set $C^{\infty}_{0}(\mathbb{R}^{k})$ of compactly supported $C^{\infty}$-functions on $\mathbb{R}^{k}$ is dense in it. Since $t^{\varepsilon}\varphi(t)\to\infty$ and $t^{-\varepsilon}\varphi(t)\to0$ as $t\to\infty$ whatever $\varepsilon>0$ \cite[Section~1.5, Property $1^{\circ}$]{Seneta76}, we have the continuous embeddings
\begin{equation}\label{two-sided-embeddings}
H^{s+\varepsilon,(s+\varepsilon)/(2b)}(\mathbb{R}^{k})\hookrightarrow
H^{s,s/(2b);\varphi}(\mathbb{R}^{k})\hookrightarrow H^{s-\varepsilon,(s-\varepsilon)/(2b)}(\mathbb{R}^{k})
\quad\mbox{for every $\varepsilon>0$};
\end{equation}
i.e. the function parameter $\varphi$ characterizes the subordinate smoothness of distributions with respect to their main anisotropic smoothness defined by the number parameters $s$ and $s/(2b)$. Certainly, the definition of this space makes sense for every real number $b>0$. If $b=1/2$, the space becomes isotropic, is denoted by $H^{s;\varphi}(\mathbb{R}^{k})$, and is also applicable in the $k=1$ case.

If $U$ is a nonempty open subset of $\mathbb{R}^{k}$, then the linear space $H^{s,s/(2b);\varphi}(U)$ consists, by definition, of the restrictions of all distributions $w\in H^{s,s/(2b);\varphi}(\mathbb{R}^{k})$ to $U$ and is endowed with the norm
\begin{equation*}
\|v,H^{s,s/(2b);\varphi}(U)\|:= \inf\bigl\{\|w,H^{s,s/(2b);\varphi}(\mathbb{R}^{k})\|\}
\end{equation*}
of $v\in H^{s,s/(2b);\varphi}(U)$, where the infimum is taken over all $w\in H^{s,s/(2b);\varphi}(\mathbb{R}^{k})$ such that $w=v$ in $U$. The space $H^{s,s/(2b);\varphi}(U)$ is Hilbert and separable (this norm  is induced by an inner product). The set
\begin{equation}\label{c-infty-set}
C^{\infty}_{0}(\overline{U}):=\{w\!\upharpoonright\!\overline{U}:w\in C^{\infty}_{0}(\mathbb{R}^{k})\}
\end{equation}
is dense in this space; as usual, $w\!\upharpoonright\!\overline{U}$ stands for the restriction of $w$ to $\overline{U}$. If $U$ is bounded, then \eqref{c-infty-set} is denoted by $C^{\infty}(\overline{U})$.

Studying the parabolic problem in the cylinder $\Omega$, we need the following examples of $H^{s,s/(2b);\varphi}(U)$: the anisotropic space $H^{s,s/(2b);\varphi}(\Omega)$ (the case where $U=\Omega$ and $k=n+1$) and the isotropic space $H^{s;\varphi}(G)$ (the case where $U=G$, $k=n$, and $b=1/2$). We also need the anisotropic space $H^{s,s/(2b);\varphi}(S)$ defined on the base of $H^{s,s/(2b);\varphi}(\Pi)$ with the help of certain local coordinates on the lateral surface $S$ of $\Omega$; here,  $\Pi:=\mathbb{R}^{n-1}\times(0,\tau)$. Let us give the definition \cite{Los16JMathSci}. It suffices to restrict ourselves to the $s>0$ case.

We arbitrarily choose a finite atlas from $C^{\infty}$-structure on the compact manifold $\Gamma$ (certainly, the structure is induced by $\mathbb{R}^{n}$). Let this atlas be formed by local charts
$\theta_{j}:\mathbb{R}^{n-1}\leftrightarrow\Gamma_{j}$ where $j=1,\ldots,\lambda$. Here, each
$\theta_{j}$ is a $C^{\infty}$-diffeomorphism of whole
$\mathbb{R}^{n-1}$ on an open subset $\Gamma_{j}$ of the manifold $\Gamma$, with $\Gamma:=\Gamma_{1}\cup\cdots\cup\Gamma_{\lambda}$. Moreover, we arbitrarily choose functions
$\chi_{j}\in C^{\infty}(\Gamma)$, with $j=1,\ldots,\lambda$, such that
$\mathrm{supp}\,\chi_{j}\subset\Gamma_{j}$ and $\chi_{1}+\cdots+\chi_{\lambda}=1$ on $\Gamma$.

By definition, the linear space $H^{s,s/(2b);\varphi}(S)$ consists of all functions $\omega\in L_2(S)$ that the function $\omega_{j}(x,t):=\chi_{j}(\theta_{j}(x))\,\omega(\theta_{j}(x),t)$ of $x\in\mathbb{R}^{n-1}$ and $t\in(0,\tau)$ belongs to $H^{s,s/(2b);\varphi}(\Pi)$ for each $j\in\{1,\ldots,\lambda\}$. (Of course, $L_2(S)$ is the Hilbert space of all square-integrable functions over~$S$.) The space $H^{s,s/(2b);\varphi}(S)$ is endowed with the norm
\begin{equation}\label{9f3.8a}
\|\omega,H^{s,s/(2b);\varphi}(S)\|:=
\biggl(\sum_{j=1}^{\lambda}\,
\|\omega_{j},H^{s,s/(2b);\varphi}(\Pi)\|^{2}\biggr)^{1/2}
\end{equation}
and the corresponding inner product. Note that each mapping $\Pi\ni(x,t)\mapsto(\theta_{j}(x),t)$ sets special coordinates on the part $\Gamma_{j}\times(0,\tau)$ of $S$. The space $H^{s,s/(2b);\varphi}(S)$ is Hilbert and separable and does not depend up to equivalence of norms on the choice of $\theta_{j}$ and $\chi_{j}$ \cite[Theorem~1]{Los16JMathSci}. The set $C^{\infty}(\overline{S})$ is dense in this space.

To formulate compatibility conditions for the right-hand parts of the parabolic problem, we also need some spaces over $\Gamma$. The isotropic space $H^{s;\varphi}(\Gamma)$ is defined on the base of $H^{s;\varphi}(\mathbb{R}^{n-1})$ with the help of $\theta_{j}$ and $\chi_{j}$ as follows: $H^{s;\varphi}(\Gamma)$ consists of all distributions $w$ on the closed manifold $\Gamma$ that the distribution $w_{j}:=(\chi_{j}w)\circ\theta_{j}$ belongs to  $H^{s;\varphi}(\mathbb{R}^{n-1})$ for each $j\in\{1,\ldots,\lambda\}$. Here, $(\chi_{j}w)\circ\theta_{j}$ denotes the representation of the distribution $\chi_{j}w$ in the local chart $\theta_{j}$. The space
$H^{s;\varphi}(\Gamma)$ is endowed with the norm
\begin{equation*}
\|w,{H^{s;\varphi}(\Gamma)}\|:=
\biggl(\sum_{j=1}^{\lambda}\,
\|w_{j},H^{s;\varphi}(\mathbb{R}^{n-1})\|^{2}\biggr)^{1/2}
\end{equation*}
and the corresponding inner product. This space is Hilbert and separable and does not depend up to equivalence of norms on the choice of $\theta_{j}$ and $\chi_{j}$ \cite[Theorem~2.3]{MikhailetsMurach14}. The set $C^{\infty}(\Gamma)$ is dense in the space.

Of course, analogues of embeddings \eqref{two-sided-embeddings} hold true for introduced spaces. If these spaces are defined over a bounded set, the corresponding embeddings are compact.

\section{Compatibility conditions for the problem}\label{sec-operator}

They are formulated for the right-hand sides of \eqref{4f1}--\eqref{4f3} and are used to describe the range of the problem operator. These conditions mean that time derivatives of the solutions $u_{k}(x,t)$ at $\nobreak{t=0}$, which can be found from \eqref{4f1} and \eqref{4f3}, should satisfy \eqref{4f2} and some natural relations obtained by the differentiation of \eqref{4f2} with respect to~$t$ (see, e.g., \cite[Chapter~7, Section~10]{LadyzhenskajaSolonnikovUraltzeva67}). The number of such compatibility conditions depends on the solutions smoothness expressed in terms of anisotropic Sobolev spaces. Let us write these conditions.

We put
$$
\sigma_0:=\max\{0,l_1+1,\dots,l_m+1\}
$$
and suppose that $s\in\mathbb{R}$ and $s\geq\sigma_0$. Mapping \eqref{4f9} extends uniquely (by continuity) to a bounded linear operator
\begin{equation}\label{25f4a}
\begin{split}
\Lambda:&\bigoplus_{k=1}^{N}
H^{s+2b\varkappa_k,(s+2b\varkappa_k)/(2b)}(\Omega)\\
\to&\bigl(H^{s,s/(2b)}(\Omega)\bigr)^N\oplus
\bigoplus_{j=1}^{m}H^{s-l_j-1/2,(s-l_j-1/2)/(2b)}(S)
\oplus\bigoplus_{k=1}^{N}\bigoplus_{r=0}^{\varkappa_k-1}
H^{s+2b\varkappa_k-2br-b}(G).
\end{split}
\end{equation}
This follows directly from \cite[Chapter~I, Lemma~4, and Chapter~II, Theorems 3 and 7]{Slobodetskii58}.

We consider any vector-valued function $u(x,t)$ belonging to the domain of this operator and put $(f,g,h):=\Lambda u$ (of course the notation from Section~\ref{sec-prob-state} is used for scalar components of the vector-valued functions $u$, $f$, $g$, and $h$). According to \cite[Chapter~II, Theorem~7]{Slobodetskii58}, there exists a trace
\begin{equation*}
\partial^{\,r}_t u_k(\cdot,0)\in H^{s+2b\varkappa_k-2br-b}(G)
\end{equation*}
for each $k\in\{1,...,N\}$ and every integer $r$ subject to
$$
0\leq r<\frac{s}{2b}+\varkappa_k-\frac{1}{2}
$$
(and only for these values of $r$). This trace is expressed via $f$ and $h$ from system \eqref{4f1} and initial conditions \eqref{4f3}. Indeed, it follows from Condition~\ref{9cond2.2sys} that
\begin{equation*}
\partial^{\varkappa_j}_{t}u_j(x,t)=
-\sum_{k=1}^{N}\sum_{\substack{|\alpha|+2b\beta\leq 2b\varkappa_{k},\\ \beta\leq\varkappa_{k}-1}}
a^{\alpha,\beta}_{j,k}(x,t)\,D^\alpha_x\partial^\beta_t   u_{k}(x,t)+f_{j}(x,t)
\end{equation*}
for all $x\in G$ and $t\in(0,\tau)$ and each $j\in\{1,\dots,N\}$. Differentiating this equality with respect to $t$ and using \eqref{4f3}, we obtain the following recurrent formula:
\begin{equation}\label{29f6}
\begin{split}
(\partial^{r}_t u_j)(x,0)&=h_{j,r}(x)\quad\mbox{for all}\quad j\in\{1,\ldots,N\}\quad\mbox{and}\quad r\in\{0,\dots,\varkappa_j-1\},\\
(\partial^{r}_t u_j)(x,0)&=
-\sum_{k=1}^{N}\!\sum_{\substack{|\alpha|+2b\beta\leq 2b\varkappa_{k}\\ \beta\leq\varkappa_k-1}}
\sum\limits_{q=0}^{r-\varkappa_j}\!
\binom{r-\varkappa_j}{q}(\partial^{r-\varkappa_j-q}_t
a^{\alpha,\beta}_{j,k})(x,0)D^\alpha_x(\partial^{\beta+q}_t
u_k)(x,0)+\\
&+(\partial^{\,r-\varkappa_j}_t f_j)(x,0)\\
&\qquad\mbox{for all}\quad j\in\{1,\dots,N\}\quad\mbox{and}\\
&\qquad r\in\mathbb{Z}\quad\mbox{such that}\quad \varkappa_j\leq r<\frac{s}{2b}+\varkappa_j-\frac{1}{2}.
\end{split}
\end{equation}

Moreover, there exists a trace
$$
\partial^{\,r}_t g_j(\cdot,0)\in H^{s-l_j-1/2-2br-b}(\Gamma)
$$
for each $j\in\{1,\dots,m\}$ and every integer $r$ subject to
$$
0\leq r<\frac{s-l_j-1/2-b}{2b}
$$
(an only for these values of $r$). This trace is expressed in terms of $u$ in the following way from boundary conditions \eqref{4f2}:
\begin{equation}\label{29f4bb}
\begin{aligned}
(\partial^{r}_t g_j)(x,0)&=
\partial^{r}_{t}\biggl(\sum_{i=1}^{N}B_{j,i}u_i(x,t)\biggr)\bigg|_{t=0}\\
&=\sum_{i=1}^{N}\sum_{|\alpha|+2b\beta\leq l_j+2b\varkappa_{i}}\,
\sum_{q=0}^{r}\binom{r}{q}
(\partial^{r-q}_t b^{\alpha,\beta}_{j,i})(x,0)\,
D^\alpha_x(\partial^{\beta+q}_t u_i)(x,0)
\end{aligned}
\end{equation}
for almost all $x\in\Gamma$. Here, each function $(\partial^{\,\beta+q}_{t})u_i(x,0)$ of $x\in G$ is expressed in terms of  $f(x,t)$ and $h(x)$ by means of \eqref{29f6}.

Substituting \eqref{29f6} in the right-hand side of \eqref{29f4bb}, we obtain the compatibility conditions
\begin{equation}\label{25f8}
\begin{gathered}
\partial^{r}_t g_j\!\upharpoonright\!\Gamma=
\mathcal{B}_{j,r}(v_{1,0},\dots,v_{1,[l_j/(2b)]+\varkappa_1+r},\dots,
v_{N,0},\dots,v_{N,[l_j/(2b)]+\varkappa_N+r})\!\upharpoonright\!\Gamma \\
\mbox{for each}\;\;j\in\{1,\dots,m\}\;\;\mbox{and every}\;\;r\in\mathbb{Z}\\
\mbox{such that}\;\;0\leq r<\frac{s-l_j-1/2-b}{2b}.
\end{gathered}
\end{equation}
Here, the functions written between the parentheses are defined as follows:
\begin{equation}\label{29f9}
\begin{split}
v_{i,\mu}(x)&:=h_{i,\mu}(x)\quad\mbox{if}\quad 0\leq \mu\leq\varkappa_i-1,\\
v_{i,\mu}(x)&:=
-\sum_{k=1}^{N}\!\sum_{\substack{|\alpha|+2b\beta\leq 2b\varkappa_{k}\\ \beta\leq\varkappa_k-1}}
\sum\limits_{q=0}^{\mu-\varkappa_i}\!
\binom{\mu-\varkappa_i}{q}(\partial^{\mu-\varkappa_i-q}_t
a^{\alpha,\beta}_{i,k})(x,0)D^\alpha_x v_{k,\beta+q}(x)+\\
&+(\partial^{\,\mu-\varkappa_i}_t f_i)(x,0)\quad\mbox{if}\quad \mu\geq\varkappa_i.
\end{split}
\end{equation}
Moreover, we put
\begin{equation}\label{29f9B}
\begin{aligned}
&\mathcal{B}_{j,r}(v_{1,0},\dots,v_{1,[l_j/(2b)]+\varkappa_1+r},\dots,
v_{N,0},\dots,v_{N,[l_j/(2b)]+\varkappa_N+r})(x):=\\
&:=\sum_{i=1}^{N}\sum_{|\alpha|+2b\beta\leq l_j+2b\varkappa_{i}}\,
\sum_{q=0}^{r}\binom{r}{q}(\partial^{r-q}_t b^{\alpha,\beta}_{j,i})(x,0)\,D^\alpha_{x}v_{i,\beta+q}(x)
\end{aligned}
\end{equation}
for almost all $x\in G$. Of course, if $s\leq l_j+1/2+b$, we have no  compatibility conditions \eqref{25f8} involving $g_j$.

The number of compatibility conditions \eqref{25f8} is a function of  $s\geq\sigma_0$. All points of its discontinuity form the set
\begin{equation}\label{set-E}
E:=\{(2l+1)b+l_j+1/2:j,l\in\mathbb{Z},\,1\leq j\leq m,\,l\geq0\}\cap(\sigma_0,\infty).
\end{equation}

\section{Main results}\label{sec-results}

Let $s\in\mathbb{R}$, $s>\sigma_0$, and $\varphi\in\mathcal{M}$. If $\varphi(\cdot)\equiv1$, we also assume the case where $s=\sigma_0$. Taking \eqref{25f4a} into account, we consider the Hilbert spaces
\begin{equation*}
\bigoplus_{k=1}^{N}
H^{s+2b\varkappa_k,(s+2b\varkappa_k)/(2b);\varphi}(\Omega)
\end{equation*}
and
\begin{align*}
\mathcal{H}^{s,s/(2b);\varphi}
:=\bigl(H^{s,s/(2b);\varphi}(\Omega)\bigr)^N&\oplus
\bigoplus_{j=1}^{m}H^{s-l_j-1/2,(s-l_j-1/2)/(2b);\varphi}(S)\\
&\oplus\bigoplus_{k=1}^{N}\bigoplus_{r=0}^{\varkappa_k-1}\,
H^{s+2b\varkappa_k-2br-b;\varphi}(G).
\end{align*}
The first of them serves as the domain of an isomorphism induced by  parabolic problem \eqref{4f1}--\eqref{4f3}, whereas the second contains the range of this isomorphism. We denote this range by $\mathcal{Q}^{s,s/(2b);\varphi}$ and define it in different ways in the cases where $s\notin E$ and where $s\in E$.

If $s\notin E$, then, by definition, the linear space $\mathcal{Q}^{s,s/(2b);\varphi}$ consists of all vectors $(f,g,h)\in\mathcal{H}^{s,s/(2b);\varphi}$ satisfying compatibility conditions \eqref{25f8} and is endowed with the norm and inner product in $\mathcal{H}^{s,s/(2b);\varphi}$. This space is well defined and complete (i.e. Hilbert) whatever $\varphi\in\mathcal{M}$. This is true for the parameter $\varphi(\cdot)\equiv1$ because the differential and  trace operators used in \eqref{25f8} are bounded on relevant pairs of Sobolev spaces \cite[Chapter~I, Lemma~4, and Chapter~II, Theorems 3 and 7]{Slobodetskii58}. Hence, this is also true for every  $\varphi\in\mathcal{M}$ in view of the continuous embedding
\begin{equation*}
\mathcal{H}^{s-2m,(s-2m)/(2b);\varphi}\hookrightarrow
\mathcal{H}^{s-\varepsilon-2m,(s-\varepsilon-2m)/(2b)}
\end{equation*}
considered for sufficiently small numbers $\varepsilon>0$ such that $s$ and $s-\varepsilon$ belong to the same connected component of $(\sigma_0,\infty)\setminus E$ (the compatibility conditions will be the same for such $s$ and $s-\varepsilon$). Of course,  $\mathcal{Q}^{s,s/(2b);\varphi}=\mathcal{H}^{s,s/(2b);\varphi}$ when  $s\leq\min\{l_1\,\ldots,l_m\}+1/2+b$.

If $s\in E$, then we put
\begin{equation}\label{25f10}
\mathcal{Q}^{s,s/(2b);\varphi}:=
\bigl[\mathcal{Q}^{s-\varepsilon,(s-\varepsilon)/(2b);\varphi},
\mathcal{Q}^{s+\varepsilon,(s+\varepsilon)/(2b);\varphi}\bigr]_{1/2}.
\end{equation}
Here, $[\cdot,\cdot]_{1/2}$ denotes the Hilbert space obtained by the quadratic interpolation with parameter $1/2$ between Hilbert spaces written in brackets, whereas the number $\varepsilon\in(0,1/2)$ is arbitrarily chosen. (We recall the definition of such an interpolation in the next section.) The right-hand side of \eqref{25f10} does not depend up to equivalence of norms on indicated $\varepsilon$ and is continuously embedded in $\mathcal{H}^{s,s/(2b);\varphi}$, which will be justified in Remark~\ref{rem-range-independence}. Of course, $s\mp\varepsilon\notin E$. Hence, the space $\mathcal{Q}^{s,s/(2b);\varphi}$ is well defined in the case where $s\in E$.

\begin{theorem}\label{25th4.1}
Mapping \eqref{4f9} extends uniquely (by continuity) to a topological isomorphism
\begin{equation}\label{25f11}
\Lambda:\bigoplus_{k=1}^{N}
H^{s+2b\varkappa_k,(s+2b\varkappa_k)/(2b);\varphi}(\Omega)\leftrightarrow
\mathcal{Q}^{s,s/(2b);\varphi}
\end{equation}
for each real number $s>\sigma_0$ and every function parameter  $\varphi\in\nobreak\mathcal{M}$.
\end{theorem}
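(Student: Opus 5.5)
Our plan is to deduce the theorem from the classical Sobolev case by quadratic interpolation with a function parameter. The starting point is the known result for $\varphi(\cdot)\equiv1$. There, for every real $s\geq\sigma_0$ with $s\notin E$, the mapping \eqref{4f9} extends to an isomorphism
\[
\Lambda:\bigoplus_{k=1}^{N}H^{s+2b\varkappa_k,(s+2b\varkappa_k)/(2b)}(\Omega)\leftrightarrow\mathcal{Q}^{s,s/(2b)}.
\]
This is the Agranovich--Vishik / Solonnikov / Zhitarashu theorem for Petrovskii parabolic systems under Conditions \ref{9cond2.1sys}--\ref{9cond2.3sys}. For $s\in E$ it holds with $\mathcal{Q}^{s,s/(2b)}$ defined by \eqref{25f10}. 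We take this as given.

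\emph{Case $s\notin E$.} Fix $s>\sigma_0$ with $s\notin E$, and choose $\varepsilon>0$ so small that $[s-\varepsilon,s+\varepsilon]$ lies in one connected component of $(\sigma_0,\infty)\setminus E$. Then the isomorphisms at the levels $s\mp\varepsilon$ involve the same compatibility conditions \eqref{25f8}. Define the interpolation parameter
\[
\psi(y):=y^{1/2}\varphi(y^{1/(2\varepsilon)})\quad\mbox{for }y\geq1,
\]
which is an interpolation parameter for quadratic interpolation. Interpolating the two isomorphisms with $\psi$ yields an isomorphism between the interpolated domain and range spaces. The domain side is handled by the interpolation formula
\[
\bigl[H^{\sigma-\varepsilon,(\sigma-\varepsilon)/(2b)}(U),H^{\sigma+\varepsilon,(\sigma+\varepsilon)/(2b)}(U)\bigr]_{\psi}=H^{\sigma,\sigma/(2b);\varphi}(U).
\]
It holds for $U=\Omega$ and for $U=S$ by means of extension operators and local charts, and it has an isotropic analogue on $G$. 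Applying it componentwise to direct sums, the interpolated domain equals $\bigoplus_k H^{s+2b\varkappa_k,(s+2b\varkappa_k)/(2b);\varphi}(\Omega)$, and the interpolated $\mathcal{H}$-spaces equal $\mathcal{H}^{s,s/(2b);\varphi}$.

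\emph{Main obstacle.} The remaining and hardest step is to show that
\[
\bigl[\mathcal{Q}^{s-\varepsilon,(s-\varepsilon)/(2b)},\mathcal{Q}^{s+\varepsilon,(s+\varepsilon)/(2b)}\bigr]_{\psi}=\mathcal{Q}^{s,s/(2b);\varphi}.
\]
The spaces $\mathcal{Q}$ are subspaces of $\mathcal{H}$ cut out by the conditions \eqref{25f8}, which are the same at all three levels. Interpolation of subspaces is legitimate once we have a linear projector $P$ of $\mathcal{H}^{s-\varepsilon,(s-\varepsilon)/(2b)}$ onto $\mathcal{Q}^{s-\varepsilon,(s-\varepsilon)/(2b)}$ that restricts to a bounded projector of $\mathcal{H}^{s+\varepsilon,(s+\varepsilon)/(2b)}$ onto $\mathcal{Q}^{s+\varepsilon,(s+\varepsilon)/(2b)}$. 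The subspace-interpolation lemma then gives the identity, with $\mathcal{Q}^{s,s/(2b);\varphi}=\mathcal{Q}^{s\pm\varepsilon,(s\pm\varepsilon)/(2b)}\cap\mathcal{H}^{s,s/(2b);\varphi}$ by definition.

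To build $P$, we would use a right inverse $T$, independent of the smoothness level, of the map sending $g$ to the traces $(\partial_t^r g_j(\cdot,0))_{j,r}$ for the relevant $r$. Such a $T$ can be obtained by a universal lifting operator in local coordinates $(x,t)\mapsto$ functions on $S$, of the type $\sum_r\frac{t^r}{r!}\chi(t)\,\mathcal{K}_t w_r$ with a parabolic mollifier $\mathcal{K}_t$. We then set
\[
P(f,g,h):=\bigl(f,\;g-T\bigl((\partial^r_tg_j\!\upharpoonright\!\Gamma)-\mathcal{B}_{j,r}(v)\!\upharpoonright\!\Gamma\bigr)_{j,r},\;h\bigr).
\]
This is a projector onto the compatible vectors and is bounded at both levels $s\mp\varepsilon$, since the differential and trace operators in \eqref{25f8} are bounded there.

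\emph{Case $s\in E$.} Here $\mathcal{Q}^{s,s/(2b);\varphi}$ is defined by \eqref{25f10}. Interpolate the isomorphisms already obtained at $s\pm\varepsilon$ for the same $\varphi$, using the parameter $[\cdot,\cdot]_{1/2}$. This requires the interpolation formula
\[
\bigl[H^{\sigma-\varepsilon,(\sigma-\varepsilon)/(2b);\varphi},H^{\sigma+\varepsilon,(\sigma+\varepsilon)/(2b);\varphi}\bigr]_{1/2}=H^{\sigma,\sigma/(2b);\varphi}
\]
on the domain side. The range side is then exactly \eqref{25f10}.

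\emph{Density and uniqueness.} Uniqueness of the extension follows from the density of $(C^\infty(\overline{\Omega}))^N$ in the domain space. That $\Lambda$ agrees with \eqref{4f9} on smooth functions is automatic, because the interpolated operator is a restriction of the Sobolev-level operator.
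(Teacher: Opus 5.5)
Your proposal is correct and follows the paper's proof essentially step for step. You interpolate with the parameter \eqref{25f16} between Sobolev-level isomorphisms taken in one component $J_l$, identify the range via Proposition~\ref{8prop1} using the projector $P$ built from a level-independent right inverse $T$ of the Cauchy-data operator on $S$, and then use $[\cdot,\cdot]_{1/2}$-interpolation for $s\in E$. Two small points need care. First, the cited Sobolev result (Zhitarashu) is only available when $s_i+1/2\notin\mathbb{Z}$ and $s_i/(2b)+1/2\notin\mathbb{Z}$, so you should choose $\varepsilon$ so that $s\pm\varepsilon$ avoid this discrete set, as the paper does with its choice of $s_0<s<s_1$. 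Second, the intersection given by Proposition~\ref{8prop1} is with the lower space $\mathcal{Q}^{s-\varepsilon,(s-\varepsilon)/(2b)}$, not $\mathcal{Q}^{s\pm\varepsilon,(s\pm\varepsilon)/(2b)}$ as you wrote.
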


This theorem is known if $\varphi(\cdot)\equiv1$, with $s\geq\sigma_0$. In this case, it is contained in Solonnikov's result \cite[Theorem~5.4]{Solonnikov65} on the assumption that $s/(2b)\in\mathbb{Z}$, and in Zhitarashu's result \cite[Theorem~5.7]{EidelmanZhitarashu98} on the assumption that
$s+1/2\notin\mathbb{Z}$ and $s/(2b)+1/2\notin\mathbb{Z}$.

The following two results present sufficient conditions for a solution of  parabolic problem \eqref{4f1}--\eqref{4f3} to have a prescribed regularity given in terms of Sobolev spaces of generalized smoothness. Let the problem right-hand sides be arbitrary distributions, i.e.
\begin{equation*}
f\in(\mathcal{D}'(\Omega))^{N},\quad g\in(\mathcal{D}'(S))^{m},
\quad\mbox{and}\quad h\in(\mathcal{D}'(G))^{\varkappa_1+\cdots+\varkappa_N}.
\end{equation*}
As usual, $\mathcal{D}'(\cdot)$ denotes the linear topological space of all scalar distributions given on an indicated Euclidean domain or $C^{\infty}$-manifold. A vector-valued function
\begin{equation}\label{25f12a}
u\in\bigoplus_{k=1}^{N}
H^{\sigma_0+2b\varkappa_k,(\sigma_0+2b\varkappa_k)/(2b)}(\Omega)
\end{equation}
is said to be a (strong) generalized solution to problem \eqref{4f1}--\eqref{4f3}  if $\Lambda u=(f,g,h)$, where $\Lambda$ is bounded operator \eqref{25f4a} with $s=\sigma_0$ and $\varphi(\cdot)\equiv1$. In relation to this notion of solution, Isomorphism Theorem~\ref{25th4.1} means that parabolic problem \eqref{4f1}--\eqref{4f3} is well posed (in the sense of Hadamard) on the pair of Hilbert spaces \eqref{25f11} for all $s>\sigma_0$ and $\varphi\in\mathcal{M}$. Specifically, we obtain the following result:

\begin{theorem}\label{25th4.2}
Suppose that vector-valued function \eqref{25f12a} is a generalized solution to parabolic problem \eqref{4f1}--\eqref{4f3} whose right-hand sides satisfy $(f,g,h)\in \mathcal{Q}^{s,s/(2b);\varphi}$
for certain $s>\sigma_0$ and $\varphi\in\mathcal{M}$. Then
$u_k\in H^{s+2b\varkappa_k,(s+2b\varkappa_k)/(2b);\varphi}(\Omega)$ for each $k\in\{1,\ldots,N\}$.
\end{theorem}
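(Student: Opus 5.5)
Theorem~\ref{25th4.2} follows from Isomorphism Theorem~\ref{25th4.1} combined with the injectivity of $\Lambda$ on the larger space of generalized solutions. Denote by $\Lambda_0$ the bounded operator \eqref{25f4a} with $s=\sigma_0$ and $\varphi(\cdot)\equiv1$. The case $s=\sigma_0$, $\varphi\equiv1$ is covered by the known version of Theorem~\ref{25th4.1}, so $\Lambda_0$ is a topological isomorphism onto $\mathcal{Q}^{\sigma_0,\sigma_0/(2b)}$. In particular, $\Lambda_0$ is injective on the space \eqref{25f12a}.

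Let $u$ be as in the theorem, with $(f,g,h):=\Lambda_0 u\in\mathcal{Q}^{s,s/(2b);\varphi}$ for some $s>\sigma_0$. By Theorem~\ref{25th4.1} there is a unique
\[
v\in\bigoplus_{k=1}^{N}H^{s+2b\varkappa_k,(s+2b\varkappa_k)/(2b);\varphi}(\Omega)
\]
with $\Lambda v=(f,g,h)$, where $\Lambda$ is the isomorphism \eqref{25f11}.

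Next, the two extensions agree. By the embeddings \eqref{two-sided-embeddings} and their analogues on $\Omega$, the space containing $v$ is continuously embedded in the space \eqref{25f12a}, since $s>\sigma_0$. Both operators, \eqref{25f11} and $\Lambda_0$, are continuous extensions of the mapping \eqref{4f9} from the dense set $(C^\infty(\overline{\Omega}))^N$. Moreover, $\mathcal{H}^{s,s/(2b);\varphi}\hookrightarrow\mathcal{H}^{\sigma_0,\sigma_0/(2b)}$ continuously. Hence $\Lambda v=\Lambda_0 v$: approximate $v$ by smooth functions in the stronger norm and pass to the limit in the weaker target norm. Therefore $\Lambda_0 v=(f,g,h)=\Lambda_0 u$, and injectivity of $\Lambda_0$ gives $u=v$. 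This yields $u_k\in H^{s+2b\varkappa_k,(s+2b\varkappa_k)/(2b);\varphi}(\Omega)$ for each $k$.

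The only delicate point is the embedding of the target spaces. When $s\notin E$ it is immediate. When $s\in E$, the space $\mathcal{Q}^{s,s/(2b);\varphi}$ is defined by interpolation, and we need its continuous embedding in $\mathcal{H}^{s,s/(2b);\varphi}$, which Remark~\ref{rem-range-independence} asserts. The rest is routine once Theorem~\ref{25th4.1} and the known Sobolev case at $s=\sigma_0$ are available.
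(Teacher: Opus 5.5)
Your proof is correct and is the deduction the paper intends. The paper gives no separate proof of Theorem~\ref{25th4.2}; it presents it as a direct consequence of Theorem~\ref{25th4.1} (which gives the more regular preimage $v$), of the fact that both operators extend \eqref{4f9} by continuity (so $\Lambda v=\Lambda_0 v$, with the embedding from Remark~\ref{rem-range-independence} needed when $s\in E$), and of uniqueness from the isomorphism for $s=\sigma_0$, $\varphi(\cdot)\equiv1$, which the paper takes as known and you rely on in the same way.
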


Let us formulate a local version of this result. Suppose that $U$ is an open subset of $\mathbb{R}^{n+1}$ such that  $\Omega_0:=U\cap\Omega\neq\emptyset$ and $U\cap\Gamma=\emptyset$.
Put $\Omega':=U\cap\partial\overline{\Omega}$, $S_0:=U\cap S$, $S':=U\cap \{(x,\tau):x\in\Gamma\}$, and $G_0:=U\cap G$. Let $H^{s,s/(2b);\varphi}_{\mathrm{loc}}(\Omega_0,\Omega')$ denote the linear space of all distributions $v\in\mathcal{D}'(\Omega)$ such that $\chi v\in H^{s,s/(2b);\varphi}(\Omega)$ for every function $\chi\in C^\infty (\overline\Omega)$ satisfying  $\mathrm{supp}\,\chi\subset\Omega_0\cup\Omega'$. Similarly, let
$H^{s,s/(2b);\varphi}_{\mathrm{loc}}(S_0,S')$ stand for the linear space of all distributions $\omega\in\mathcal{D}'(S)$ such that $\chi\omega\in H^{s,s/(2b);\varphi}(S)$ for each function $\chi\in C^\infty (\overline S)$ subject to $\mathrm{supp}\,\chi\subset S_0\cup S'$. Finally, let $H^{s;\varphi}_{\mathrm{loc}}(G_0)$ denote the linear space of all distributions $w\in\mathcal{D}'(G)$ such that $\chi w\in H^{s;\varphi}(G)$ for every function $\chi\in C^\infty (\overline G)$ satisfying $\mathrm{supp}\,\chi\subset G_0$. Of course, the $\mathrm{loc}$-spaces are defined for the same $s$ as their base counterparts.

\begin{theorem}\label{25th4.3}
Let $s>\sigma_0$ and $\varphi\in\mathcal{M}$. Suppose that vector-valued function \eqref{25f12a} is a generalized solution to parabolic problem \eqref{4f1}--\eqref{4f3} whose right-hand sides satisfy the following conditions:
\begin{gather}\label{25f13}
f_j\in H^{s,s/(2b);\varphi}_{\mathrm{loc}}(\Omega_0,\Omega')
\quad\mbox{for each}\;\;j\in\{1,\dots,N\},\\
g_{j}\in H^{s-l_j-1/2,(s-l_j-1/2)/(2b);\varphi}_{\mathrm{loc}}(S_0,S')
\quad\mbox{for each}\;\;j\in\{1,\dots,m\},\label{25f14}\\
h_{k,r}\in H^{s+2b\varkappa_k-2br-b;\varphi}_{\mathrm{loc}}(G_0)
\quad\mbox{for all}\;\;k\in\{1,\dots,N\}\;\;\mbox{and}\;\; r\in\{0,\dots,\varkappa_k-1\}.
\label{25f15}
\end{gather}
Then $u_k\in H^{s+2b\varkappa_k,(s+2b\varkappa_k)/(2b);\varphi}_{\mathrm{loc}}
(\Omega_0,\Omega')$ for each $k\in\{1,\ldots,N\}$.
\end{theorem}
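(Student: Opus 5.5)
We will derive Theorem~\ref{25th4.3} from the global isomorphism Theorem~\ref{25th4.1} (equivalently Theorem~\ref{25th4.2}) by the standard localization argument: we multiply the solution by a cut-off function and improve its regularity step by step in the smoothness index. Fix $\chi\in C^\infty(\overline\Omega)$ with $\mathrm{supp}\,\chi\subset\Omega_0\cup\Omega'$. We choose a function $\eta\in C^\infty(\overline\Omega)$ with $\mathrm{supp}\,\eta\subset\Omega_0\cup\Omega'$ and $\eta=1$ in a neighbourhood of $\mathrm{supp}\,\chi$.

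Commuting the cut-off with the problem operator gives
\[
\Lambda(\chi u)=\chi\Lambda u+\Lambda'(\eta u)=(\chi f,\chi g,\chi h)+\Lambda'(\eta u).
\]
Here $\Lambda'$ collects the commutators $[A,\chi]$, $[B,\chi]$ and $[C|_{t=0},\chi]$. The operator $[A_{j,k},\chi]$ has order at most $2b\varkappa_k-1$ in the anisotropic sense, so it lowers smoothness by at most $1$ relative to $A_{j,k}$. The same holds for $B$ and for the initial-trace part: since $\chi$ depends on $t$, the traces $\partial_t^r(\chi u_k)|_{t=0}$ are combinations of $\chi\,\partial_t^r u_k|_{t=0}$ and lower $t$-derivatives. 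We will also use the identity $u=\eta u$ near $\mathrm{supp}\,\chi$.

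We induct on the smoothness. Suppose $\eta u_k\in H^{\sigma+2b\varkappa_k,(\sigma+2b\varkappa_k)/(2b);\varphi_1}(\Omega)$ for every such $\eta$. For the base step take $\sigma=\sigma_0$ and $\varphi_1\equiv1$, which holds by \eqref{25f12a}. Then $\Lambda'(\eta u)\in\mathcal{H}^{\sigma+1,(\sigma+1)/(2b);\varphi_1}$ by boundedness of differential and trace operators on these spaces. These properties hold for generalized smoothness by interpolation, Section~\ref{sec-auxiliary-results}. Together with \eqref{25f13}--\eqref{25f15}, this gives $\Lambda(\chi u)\in\mathcal{H}^{s_1,s_1/(2b);\varphi_2}$ with $s_1=\min\{s,\sigma+1\}$, where $\varphi_2=\varphi$ if $s_1=s$ and $\varphi_2=\varphi_1$ otherwise.

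Since $\Lambda(\chi u)$ lies in the range of the problem operator, it satisfies the compatibility conditions \eqref{25f8} at the relevant level. Hence it belongs to $\mathcal{Q}^{s_1,s_1/(2b);\varphi_2}$ when $s_1\notin E$. Theorem~\ref{25th4.2}, applied to the generalized solution $\chi u$, then yields $\chi u_k\in H^{s_1+2b\varkappa_k,\dots;\varphi_2}(\Omega)$. Iterating with nested cut-offs, we reach $s$ after finitely many steps. To avoid landing on $E$ at intermediate steps, we choose increments slightly smaller than $1$ and pass through non-exceptional values.

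The main obstacle is the final step when $s\in E$. There $\mathcal{Q}^{s,s/(2b);\varphi}$ is defined by interpolation \eqref{25f10}, not by compatibility conditions, so membership of $\Lambda(\chi u)$ in it is not immediate. We will handle this as follows. We already know $\chi u\in H^{s-\varepsilon+2b\varkappa_k;\varphi}$ for small $\varepsilon$, and the isomorphism \eqref{25f11} identifies $\mathcal{Q}^{s;\varphi}$ with $\Lambda$ of the solution space. So it suffices to show $\Lambda(\chi u)\in\Lambda\bigl(\bigoplus_k H^{s+2b\varkappa_k;\varphi}\bigr)$. We plan to write $\Lambda(\chi u)=(\chi f,\chi g,\chi h)+\Lambda'(\eta u)$. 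We then use that $(\chi f,\chi g,\chi h)$ equals $\Lambda(\chi u)$ minus a term that is smoother by one unit, namely $\Lambda'(\eta u)\in\mathcal{Q}$ at level $s+1-\varepsilon$. The compatibility of $(\chi f,\chi g,\chi h)$ is thus inherited from the smoother solution level.

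A cleaner alternative is to pick an auxiliary $w\in\bigoplus_k H^{s+1-\varepsilon+2b\varkappa_k;\varphi}$ with $\Lambda w=\Lambda'(\eta u)$, using Theorem~\ref{25th4.1} at the non-exceptional level $s+1-\varepsilon$. Then $(\chi f,\chi g,\chi h)=\Lambda(\chi u-w)$. Both $\chi u-w$ and the data lie at the levels $s\mp\varepsilon$, and the data is in $\mathcal{H}^{s;\varphi}$. We would conclude by checking, via Remark~\ref{rem-range-independence}, that the elements of $\mathcal{H}^{s;\varphi}$ lying in $\mathcal{Q}^{s-\varepsilon;\varphi}$ belong to $\mathcal{Q}^{s;\varphi}$. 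This is the step we expect to require the most care.
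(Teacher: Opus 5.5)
Your architecture is the paper's: the cut-offs $\chi,\eta$, the identity $\Lambda(\chi u)=\chi(f,g,h)+\Lambda'(\eta u)$ with the one-unit gain \eqref{16f59}, and Theorem~\ref{25th4.2} applied to $\chi u$. The gap is in the decisive step, namely showing that $\Lambda(\chi u)$ lies in $\mathcal{Q}^{\sigma,\sigma/(2b);\varphi}$ and not merely in $\mathcal{H}^{\sigma,\sigma/(2b);\varphi}$. You justify this by a principle that is false. Lying in the range of $\Lambda$ on the $\sigma_0$-level space and also in $\mathcal{H}^{\sigma,\sigma/(2b)}$ does not imply the level-$\sigma$ conditions \eqref{25f8}. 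Deriving them from the equations would require $\chi u$ to be at level $\sigma$ already, which is circular.

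Here is a concrete counterexample. Take a decoupled pair of heat equations with Dirichlet conditions ($b=\varkappa_k=1$, $l_j=-2$, $\sigma_0=0$, $E=\{3/2,7/2,\dots\}$) and the data $f=0$, $h=0$, $g=(t\phi,0)$ with $\phi\in C^\infty(\Gamma)$, $\phi\neq0$. These data belong to $\mathcal{Q}^{s',s'/2}$ for $0<s'<3/2$, so a generalized solution exists, and they belong to $\mathcal{H}^{2,1}$. Yet the condition $\partial_t g_1|_{t=0}=(\Delta h_{1,0}+f_1|_{t=0})\!\upharpoonright\!\Gamma$ required at $s=2$ fails. Note that your argument never uses $U\cap\Gamma=\emptyset$, which is exactly the hypothesis that lets the compatibility conditions be dropped.

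The same defect affects your $s\in E$ step in two ways:
\begin{itemize}
\item The auxiliary $w$ with $\Lambda w=\Lambda'(\eta u)$ again presupposes that $\Lambda'(\eta u)$ satisfies compatibility conditions.
\item The concluding inclusion $\mathcal{H}^{s,s/(2b);\varphi}\cap\mathcal{Q}^{s-\varepsilon,(s-\varepsilon)/(2b);\varphi}\subset\mathcal{Q}^{s,s/(2b);\varphi}$ is not supplied by Remark~\ref{rem-range-independence}, which gives only $\mathcal{Q}^{s,s/(2b);\varphi}\hookrightarrow\mathcal{H}^{s,s/(2b);\varphi}$. It also fails in general: at exceptional $s$ the interpolation space \eqref{25f10} carries an extra integral condition of Lions--Magenes ($H^{1/2}_{00}$) type. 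The data above, taken at $s=3/2$, are a counterexample.
\end{itemize}

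The missing idea is that the localization keeps you away from $\Gamma$. Since $U$ is open and $U\cap\Gamma=\emptyset$, you have $\mathrm{dist}(\mathrm{supp}\,\chi,\Gamma)>0$. So every component of $\Lambda(\chi u)$ vanishes in a neighbourhood of $\Gamma$.
\begin{itemize}
\item For $\sigma\notin E$, all conditions \eqref{25f8} for $\Lambda(\chi u)$ then reduce to $0=0$. Hence $\Lambda(\chi u)\in\mathcal{H}^{\sigma,\sigma/(2b);\varphi}$ gives $\Lambda(\chi u)\in\mathcal{Q}^{\sigma,\sigma/(2b);\varphi}$.
\item For $\sigma\in E$, the paper takes $\chi_1\in C^\infty(\overline\Omega)$ with $\chi_1=0$ near $\Gamma$ and $\chi_1=1$ near $\mathrm{supp}\,\chi$. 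Multiplication by $\chi_1$ is bounded from $\mathcal{H}^{\sigma\mp\varepsilon,(\sigma\mp\varepsilon)/(2b);\varphi}$ to $\mathcal{Q}^{\sigma\mp\varepsilon,(\sigma\mp\varepsilon)/(2b);\varphi}$. By interpolation with parameter $1/2$ (Propositions~\ref{16lem7.5} and \ref{8prop2} and \eqref{25f10}), it is bounded from $\mathcal{H}^{\sigma,\sigma/(2b);\varphi}$ to $\mathcal{Q}^{\sigma,\sigma/(2b);\varphi}$. Finally, $\Lambda(\chi u)=\chi_1\Lambda(\chi u)$.
\end{itemize}
With this, no auxiliary $w$ is needed and exceptional levels need not be avoided. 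The bootstrap runs through the levels $s-\lambda$ with $\lambda$ an integer, starting from $\lambda_0$ with $s-\lambda_0<\sigma_0$ strictly, and with a preliminary pass through $s-\varepsilon$ if $s\in\mathbb{Z}$.

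That strict inequality, or your increments below $1$, is also what repairs your bookkeeping $\varphi_2=\varphi$ when $s_1=s=\sigma+1$ and $\varphi_1\equiv1$. In that case $\mathcal{H}^{s,s/(2b)}\not\subset\mathcal{H}^{s,s/(2b);\varphi}$ for unbounded $\varphi$.
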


Note that the assumption $U\cap\Gamma=\emptyset$ allows us to omit  the compatibility conditions in the hypotheses of this theorem.

The hypotheses of Theorems \ref{25th4.2} and \ref{25th4.3} are also necessary. This follows directly from Theorem~\ref{25th4.1}, with the assumption $U\cap\Gamma=\emptyset$ being not essential for the necessity.

The following two theorems give new sufficient conditions for scalar components of the generalized solution $u$ to be continuous on the closed cylinder $\overline{\Omega}$ or its part $\Omega_0\cup\Omega'$ and to have prescribed continuous partial derivatives there.

\begin{theorem}\label{25th4.4}
Let $p\in\mathbb{Z}$ and $p+b+n/2>\sigma_0$. Suppose that vector-valued function \eqref{25f12a} is a generalized solution to parabolic problem \eqref{4f1}--\eqref{4f3} whose right-hand sides satisfy the condition $(f,g,h)\in\mathcal{Q}^{s,s/(2b);\varphi}$ (or conditions \eqref{25f13}--\eqref{25f15}) for $s:=p+b+n/2$ and a certain function parameter $\varphi\in\mathcal{M}$ subject to
\begin{equation}\label{9f4.7}
\int\limits_{1}^{\,\infty}\;\frac{dy}{y\,\varphi^2(y)}<\infty.
\end{equation}
Then, whatever $k\in\{1,...,N\}$, the distribution
$D_{x}^{\alpha}\partial_{t}^{\beta}u_k(x,t)$ is continuous on $\overline{\Omega}$ (or $\Omega_0\cup\Omega'$, resp.) provided that $0\leq|\alpha|+2b\beta\leq p+2b\varkappa_k$.
\end{theorem}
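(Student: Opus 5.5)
The plan is to reduce the statement to regularity in generalized Sobolev spaces plus an anisotropic H\"ormander-type embedding into continuous functions. The paper has announced such an embedding in Section~\ref{sec-auxiliary-results}. Fix $k$, and set $s:=p+b+n/2$ and $\sigma:=s+2b\varkappa_k$.

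\emph{Regularity.} In the global case, Theorem~\ref{25th4.2} gives
\[
u_k\in H^{\sigma,\sigma/(2b);\varphi}(\Omega).
\]
In the local case, Theorem~\ref{25th4.3} gives $u_k\in H^{\sigma,\sigma/(2b);\varphi}_{\mathrm{loc}}(\Omega_0,\Omega')$. Localizing with $\chi\in C^\infty(\overline\Omega)$, $\mathrm{supp}\,\chi\subset\Omega_0\cup\Omega'$, we have $\chi u_k\in H^{\sigma,\sigma/(2b);\varphi}(\Omega)$. Continuity on $\Omega_0\cup\Omega'$ is local, and every point there has a neighbourhood on which some such $\chi$ equals $1$. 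So both cases reduce to showing that each $w\in H^{\sigma,\sigma/(2b);\varphi}(\Omega)$ has $D_x^\alpha\partial_t^\beta w$ continuous on $\overline\Omega$ whenever $|\alpha|+2b\beta\le p+2b\varkappa_k$.

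\emph{Extension and differentiation.} By the definition of the space over $\Omega$, $w$ is the restriction of some $W\in H^{\sigma,\sigma/(2b);\varphi}(\mathbb{R}^{n+1})$. On the Fourier side, $D_x^\alpha\partial_t^\beta$ multiplies $\widetilde W$ by a factor bounded by $|\xi|^{|\alpha|}|\eta|^\beta\le(1+|\xi|^2+|\eta|^{1/b})^{(|\alpha|+2b\beta)/2}$. Hence
\[
D_x^\alpha\partial_t^\beta W\in H^{\sigma',\sigma'/(2b);\varphi}(\mathbb{R}^{n+1}),\qquad \sigma':=\sigma-|\alpha|-2b\beta\ge s-p=b+n/2 .
\]
It therefore suffices to show that $H^{b+n/2,1/2+n/(4b);\varphi}(\mathbb{R}^{n+1})$ embeds into the bounded uniformly continuous functions when \eqref{9f4.7} holds. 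Larger indices then follow, since these spaces embed into the one with the minimal index.

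\emph{Embedding (the core step).} By Fourier inversion it is enough to show $\widetilde W\in L_1(\mathbb{R}^{n+1})$; then $W$ is continuous by Riemann--Lebesgue, and so are its restrictions to $\overline\Omega$. By Cauchy--Schwarz,
\[
\int|\widetilde W|\le\|W\|\Bigl(\int\!\!\int \rho^{-2(b+n/2)}\varphi^{-2}(\rho)\,d\xi\,d\eta\Bigr)^{1/2},\qquad \rho:=(1+|\xi|^2+|\eta|^{1/b})^{1/2}.
\]
To estimate the integral, pass to anisotropic polar coordinates: set $\xi=r\omega$ and $\eta=\pm r^{2b}\theta$. The Jacobian is $r^{n-1+2b}$, so the set where $\rho\sim r$ has measure growing like $r^{n+2b-1}\,dr$. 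The integrand then becomes comparable to $r^{n+2b-1}r^{-(2b+n)}\varphi^{-2}(r)\,dr=\varphi^{-2}(r)\,dr/r$, which is integrable at infinity exactly by \eqref{9f4.7}.

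The main obstacle is making this comparison rigorous for a merely measurable slowly varying $\varphi$. I would handle it by splitting into dyadic anisotropic shells $2^{j}\le\rho<2^{j+1}$. On each shell, the uniform convergence theorem for slowly varying functions gives $\varphi(\rho)\asymp\varphi(2^j)$. The sum $\sum_j\varphi^{-2}(2^j)$ is comparable to $\int_1^\infty dy/(y\varphi^2(y))$ by the same property. Near zero the integrand is bounded, and the bounded part of the domain is harmless. This establishes continuity of every $D_x^\alpha\partial_t^\beta u_k$ in the stated range and proves Theorem~\ref{25th4.4}.
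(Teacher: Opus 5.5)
Your proposal is correct and follows the paper's route: regularity from Theorem~\ref{25th4.2} (or from Theorem~\ref{25th4.3} with a cutoff equal to $1$ near each point), then extension to $\mathbb{R}^{n+1}$, the anisotropic H\"ormander embedding, and restriction to $\overline\Omega$. The only difference is that the paper cites Proposition~\ref{prop-embedding}(i) for the embedding, whereas you prove it yourself (Fourier-multiplier reduction to the index $b+n/2$, then $\widetilde W\in L_1$ via Cauchy--Schwarz, anisotropic scaling of dyadic shells, and the uniform convergence theorem), and that self-contained argument is sound.
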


\begin{remark}\label{rem-continuous-interpretation}
Of course, the function $u_k(x,t)$ (represented by the case where $\alpha=0$ and $\beta=0$) and its generalized partial derivatives are understood in the sense of the theory of distributions (given in $\Omega$). As for the conclusion of Theorem~\ref{25th4.4}, recall that a distribution $v\in\mathcal{D}'(\Omega)$ is called continuous on $\Omega_0\cup\Omega'$ if there exists a continuous function $v_{0}$ on $\Omega_0\cup\Omega'$ such that
\begin{equation*}
v(\omega)=\int\limits_{\Omega_0}v_{0}(x,t)\,\omega(x,t)\,dxdt
\end{equation*}
for every function $\omega\in C^{\infty}(\Omega)$ subject to $\mathrm{supp}\,\omega\subset\Omega_0$, with $v(\omega)$ denoting the value of the functional $v$ at $\omega$. This definition is equivalent to the following: $\chi v\in C(\overline\Omega)$ for every function $\chi\in C^\infty (\overline\Omega)$ such that $\mathrm{supp}\,\chi\subset\Omega_0\cup\Omega'$.
\end{remark}

\begin{remark}\label{rem-sharp-condition}\rm
Condition \eqref{9f4.7} is exact in Theorem~\ref{25th4.4} if (of course) $p+2b\max\{\varkappa_1,\ldots,\varkappa_N\}\geq0$. Namely, let $s:=p+b+n/2$ and $\varphi\in\mathcal{M}$, and suppose that, for every vector-valued function \eqref{25f12a}, the following implication holds true:
\begin{equation}\label{sharp-condition-implication-global}
\begin{aligned}
&\bigl(u\;\mbox{is a solution to problem \eqref{4f1}--\eqref{4f3} for certain $(f,g,h)\in\mathcal{Q}^{s,s/(2b);\varphi}$}\bigr)\\
&\Longrightarrow\bigl(u\;\mbox{satisfies the conclusion of Theorem~\ref{25th4.4}}\bigr).
\end{aligned}
\end{equation}
Then $\varphi$ satisfies condition \eqref{9f4.7}. This inference is also true if we replace \eqref{sharp-condition-implication-global} with the implication
\begin{equation}\label{sharp-condition-implication-local}
\begin{aligned}
&\bigl(u\;\mbox{is a solution to problem \eqref{4f1}--\eqref{4f3} for some right-hand sides \eqref{25f13}--\eqref{25f15}}\bigr)\\
&\Longrightarrow\bigl(u\;\mbox{satisfies the conclusion of Theorem~\ref{25th4.4}}\bigr).
\end{aligned}
\end{equation}
\end{remark}

Thus, we cannot achieve the limiting value $s=p+b+n/2$ in the hypotheses of Theorem~\ref{25th4.4} if we use Sobolev spaces only (restricting ourselves to the $\varphi(\cdot)\equiv1$ case).

We will prove Theorems \ref{25th4.1}, \ref{25th4.3}, \ref{25th4.4} and substantiate Remark~\ref{rem-sharp-condition} in Section~\ref{sec-proofs}. In the $N=1$ case of scalar parabolic problems, these theorems were proved in  \cite[Section~6]{LosMikhailetsMurach21CPAA}. Articles \cite{DiachenkoLos22JEPE, DiachenkoLos23UMJ8} considered parabolic problems for systems of second-order PDEs and boundary conditions whose orders do not exceed~$1$. A special case of homogeneous initial conditions was studied in \cite{Los17UMJ3}. In this case, there are no compatibility conditions, and it is necessary to use specific subspaces of the anisotropic spaces considered in Section~\ref{sec-spaces}.

\section{Auxiliary results}\label{sec-auxiliary-results}

This section is devoted to known results which play a key role in our proofs. We will deduce Isomorphism Theorem~\ref{25th4.1} from the  $\varphi(\cdot)\equiv1$ case by the quadratic interpolation with an appropriate function parameter between Hilbert spaces. Let us recall the definition of this interpolation and its properties that we need. This interpolation method first appeared in \cite[p.~278]{FoiasLions61} and is  expounded in monographs \cite{Ameur19, MikhailetsMurach14, Ovchinnikov84, Simon19}. We mainly follow \cite[Section~1.1]{MikhailetsMurach14}. It suffices to restrict ourselves to separable Hilbert spaces.

Let $X:=[X_{0},X_{1}]$ be an ordered pair of separable (complex) Hilbert spaces such that $X_1$ is a dense manifold in $X_0$ and that the embedding $X_{1}\hookrightarrow X_{0}$ is continuous. Such a pair is said to be regular. For $X$ there exists a positive-definite self-adjoint operator $J$ that acts in $X_{0}$, is defined on $X_{1}$, and satisfies $\|Jv\|_{X_{0}} =\|v\|_{X_{1}}$ for every $v\in X_{1}$.

Let $\mathcal{B}$ stand for the set of all Borel measurable functions $\psi:(0,\infty)\rightarrow(0,\infty)$ such that $\psi$ is bounded on the compact interval $[a,b]$ whenever $0<a<b<\infty$ and that $1/\psi$ is bounded on the closed semiaxis $[a,\infty)$ whenever $a>0$. Given a function $\psi\in\mathcal{B}$, we have the self-adjoint (generally, unbounded) operator $\psi(J)$ in the Hilbert space $X_{0}$. This operator (i.e. the Borel function $\psi$ of $J$) is defined with the help of the spectral resolution of $J$. Let $[X_{0},X_{1}]_{\psi}$ or $X_{\psi}$ denote the domain of the operator $\psi(J)$ acting in $X_{0}$. The linear space $X_{\psi}$ is endowed with the inner product $(v_{1},v_{2})_{X_{\psi}}:=(\psi(J)v_{1},\psi(J)v_{2})_{X_{0}}$ of vectors $v_{1},v_{2}\in X_{\psi}$ and the corresponding norm. This space is Hilbert and separable.

A function $\psi\in\mathcal{B}$ is called an interpolation parameter if
it satisfies the following condition for all regular pairs $X=[X_{0},X_{1}]$ and $Y=[Y_{0},Y_{1}]$ of Hilbert spaces and every linear mapping $T$ defined on $X_{0}$: if the restriction of $T$ to $X_{j}$ is a bounded operator $T:X_{j}\rightarrow Y_{j}$ for each $j\in\{0,1\}$, then the restriction of $T$ to $X_{\psi}$ is a bounded operator $T:X_{\psi}\rightarrow Y_{\psi}$. We say in this case that the Hilbert space $X_{\psi}$ is obtained by the quadratic interpolation with the function parameter $\psi$ of the pair $X=[X_{0},X_{1}]$ (or between the spaces $X_{0}$ and $X_{1}$) and that the bounded operator $T:X_{\psi}\rightarrow Y_{\psi}$ is obtained by this interpolation of the operators $T:X_{j}\rightarrow Y_{j}$ with $j\in\{0,1\}$.

A function $\psi\in\mathcal{B}$ is an interpolation parameter if and only if there exists a concave function $\psi_{1}(r)>0$ of $r\gg1$ that the functions $\psi/\psi_{1}$ and $\psi_{1}/\psi$ are bounded in a neighbourhood of infinity. This criterion follows from Peetre's results \cite{Peetre66, Peetre68}.

Specifically, the power function $\psi(r)\equiv r^{\theta}$ is an interpolation parameter if and only if $\nobreak{0\leq\theta\leq1}$. In this case, the exponent $\theta$ serves as a number parameter of the interpolation, and the interpolation space $X_{\psi}$ is also denoted by $X_{\theta}$. This interpolation was used in \eqref{25f10} for $\theta=1/2$. Note that the theory of interpolation between normed spaces begun with the method of the quadratic interpolation with number parameter proposed independently by S.~G.~Krein and J.-L.~Lions.

We need the following result on the quadratic interpolation between Sobolev (isotropic and anisotropic) spaces:

\begin{proposition}\label{8prop5}
Let numbers $s_{0},s,s_{1}\in\mathbb{R}$ satisfy $0\leq s_{0}<s<s_{1}$, and let $\varphi\in\mathcal{M}$. Put
\begin{equation}\label{25f16}
\psi(r):=
\begin{cases}
\;r^{(s-s_{0})/(s_{1}-s_{0})}\,\varphi(r^{1/(s_{1}-s_{0})})&\text{if}
\quad r\geq1,\\
\;\varphi(1) & \text{if}\quad0<r<1.
\end{cases}
\end{equation}
Then the function $\psi\in\mathcal{B}$ is an interpolation parameter, and  the equalities
\begin{equation}\label{25f22}
H^{s+\lambda,(s+\lambda)/(2b);\varphi}(W)=
\bigl[H^{s_{0}+\lambda,(s_{0}+\lambda)/(2b)}(W),
H^{s_{1}+\lambda,(s_{1}+\lambda)/(2b)}(W)\bigr]_{\psi}
\end{equation}
and
\begin{equation}\label{8f49}
H^{s+\lambda;\varphi}(G)=
\bigl[H^{s_{0}+\lambda}(G),H^{s_{1}+\lambda}(G)\bigr]_{\psi}
\end{equation}
hold true with equivalence of norms for any real $\lambda\geq-s_{0}$ provided that $W=\Omega$ or $W=S$ in \eqref{25f22}.
\end{proposition}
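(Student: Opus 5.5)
The plan is to reduce everything to the case $W=\mathbb{R}^{k}$ (respectively $\mathbb{R}^{n}$), where the interpolation can be computed explicitly through the Fourier transform. From there we pass to $\Omega$, $S$ and $G$ via extension operators and local charts.

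\emph{Step 1: $\psi$ is an interpolation parameter.} Since $\varphi$ varies slowly, so does $r\mapsto\varphi(r^{1/(s_1-s_0)})$. Hence $\psi$ is regularly varying at infinity with index $\theta:=(s-s_0)/(s_1-s_0)\in(0,1)$. By the Karamata representation theorem, such a function is equivalent near infinity to a smooth function $\psi_1$ with $r\psi_1'(r)/\psi_1(r)\to\theta$. For large $r$ this $\psi_1$ is increasing and $\psi_1(r)/r$ is decreasing, so $\psi_1$ is equivalent to a positive concave function (its least concave majorant). The boundedness requirements in the definition of $\mathcal{B}$ follow from those in the definition of $\mathcal{M}$. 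Peetre's criterion quoted above then gives that $\psi$ is an interpolation parameter.

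\emph{Step 2: the whole-space case.} Put $\sigma_j:=s_j+\lambda\ge0$ and $\rho(\xi,\eta):=(1+|\xi|^2+|\eta|^{1/b})^{1/2}$. For the pair $[H^{\sigma_0,\sigma_0/(2b)}(\mathbb{R}^k),H^{\sigma_1,\sigma_1/(2b)}(\mathbb{R}^k)]$, the generating operator $J$ is the Fourier multiplier by $\rho^{\sigma_1-\sigma_0}$. Then $\psi(J)$ is the multiplier by $\psi(\rho^{\sigma_1-\sigma_0})=\rho^{s-s_0}\varphi(\rho)$, since $\sigma_1-\sigma_0=s_1-s_0$ and $\rho\ge1$. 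Consequently $\|\psi(J)w\|_{X_0}$ coincides exactly with norm \eqref{normR} with smoothness $s+\lambda$. The isotropic space $H^{s+\lambda;\varphi}(\mathbb{R}^n)$ and the spaces on $\Pi$ are handled in the same way, or by restriction as in Step 3.

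\emph{Step 3: passage to $\Omega$, $G$, $S$.} For $\Omega$ and $G$, we use the restriction operator $R$ together with a linear extension operator $T$ that is bounded simultaneously from $H^{\sigma,\sigma/(2b)}(\Omega)$ to $H^{\sigma,\sigma/(2b)}(\mathbb{R}^{n+1})$ for all $0\le\sigma\le\sigma_1$. Such a universal operator exists: use a Seeley/Hestenes-type reflection in $t$ across $t=0$ and $t=\tau$, and in the normal direction near $\Gamma$, with local flattening; this is the step I expect to be the main technical obstacle, as it must respect the anisotropy. Since $RT=I$, the spaces over $\Omega$ form a retract of those over $\mathbb{R}^{n+1}$. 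Interpolating $R$ and $T$ with parameter $\psi$ (allowed by Step 1) and invoking Step 2 gives \eqref{25f22} for $W=\Omega$, and likewise \eqref{8f49}.

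For $W=S$, we use the operator $\omega\mapsto(\omega_1,\dots,\omega_\lambda)$ into $(H(\Pi))^\lambda$. It has a left inverse $(\omega_j)\mapsto\sum_j\eta_j\cdot(\omega_j\circ\theta_j^{-1})$, where $\eta_j\in C_0^\infty(\Gamma_j)$ equals $1$ on $\mathrm{supp}\,\chi_j$. Both operators are bounded on the Sobolev scale, and the space over $\Pi$ is handled by the same restriction--extension argument in $t$. The retract argument then applies again. Finally, the density of $X_1$ in $X_0$, needed for regularity of the pairs, follows from the density of $C^\infty$ functions stated in Section~\ref{sec-spaces}.
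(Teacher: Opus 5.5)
Your proposal is correct and follows essentially the same route as the works the paper cites in place of a proof (Mikhailets--Murach for the spaces over $G$, and Los for the spaces over the strip $\Pi$ and the surface $S$). Those works show that $\psi$ is an interpolation parameter because it is regularly varying with index $(s-s_0)/(s_1-s_0)\in(0,1)$, compute the whole-space case exactly as a Fourier multiplier, and pass to $\Omega$, $G$, $\Pi$ and $S$ by retract arguments using extension operators and the flattening/sewing operators; the anisotropic extension operator you flagged as the main technical point is the same ingredient they rely on.
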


Formula \eqref{25f22} is proved in \cite[Theorem~2]{Los16JMathSci} for the case of $W=S$. The $W=\Omega$ case is treated in the same way as \cite[Lemma~2]{Los16JMathSci} relating to spaces on a strip. Formula \eqref{8f49} is contained in \cite[Theorem~3.2]{MikhailetsMurach14}.

Dealing with definition \eqref{25f10}, we will use another interpolation result.

\begin{proposition}\label{16lem7.5}
Let numbers $\sigma,\varepsilon\in\mathbb{R}$ satisfy
$\sigma>\varepsilon>0$, and let $\varphi\in\mathcal{M}$.
Then the equalities
\begin{equation}\label{16f46}
H^{\sigma,\sigma/(2b);\varphi}(W)=
\bigl[H^{\sigma-\varepsilon,(\sigma-\varepsilon)/(2b);\varphi}(W),
H^{\sigma+\varepsilon,(\sigma+\varepsilon)/(2b);\varphi}(W)\bigr]_{1/2}
\end{equation}
and
\begin{equation}\label{interp-half-isotropic-spaces}
H^{\sigma;\varphi}(G)=
\bigl[H^{\sigma-\varepsilon;\varphi}(G),
H^{\sigma+\varepsilon;\varphi}(G)\bigr]_{1/2}
\end{equation}
holds true up with equivalence of norms provided that $W=\Omega$ or $W=S$ in \eqref{16f46}.
\end{proposition}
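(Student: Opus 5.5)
The plan is to reduce Proposition~\ref{16lem7.5} to Proposition~\ref{8prop5} through the reiteration property of quadratic interpolation with function parameter. Choose $s_0:=0$, or any $s_0\in[0,\sigma-\varepsilon)$, and $s_1>\sigma+\varepsilon$ (for instance $s_1:=\sigma+\varepsilon+1$). Put $\lambda:=0$ and $X:=[X_0,X_1]$ with $X_0:=H^{s_0,s_0/(2b)}(W)$ and $X_1:=H^{s_1,s_1/(2b)}(W)$. This pair is regular, because $C^\infty$-functions are dense and the embedding is continuous. By Proposition~\ref{8prop5}, applied for the three values $\sigma-\varepsilon,\sigma,\sigma+\varepsilon$ in place of $s$, we obtain, up to equivalence of norms,
\begin{equation*}
H^{\sigma\mp\varepsilon,(\sigma\mp\varepsilon)/(2b);\varphi}(W)=X_{\psi_{\mp}},\qquad H^{\sigma,\sigma/(2b);\varphi}(W)=X_{\psi},
\end{equation*}
where $\psi_{\mp}$ and $\psi$ are given by \eqref{25f16}.

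The key step is the reiteration formula $[X_{\psi_-},X_{\psi_+}]_{\theta}=X_{\omega}$ with $\omega:=\psi_-^{1-\theta}\psi_+^{\theta}$, which holds for interpolation parameters $\psi_\mp$ such that $\psi_+/\psi_-$ is bounded away from zero at infinity (see \cite[Section~1.1]{MikhailetsMurach14}). I would prove it directly by spectral calculus. All spaces are domains of functions of the same operator $J$. Set $Y_0:=X_{\psi_-}$ and $Y_1:=X_{\psi_+}$. The generating operator of the pair $[Y_0,Y_1]$ is $(\psi_+/\psi_-)(J)$, restricted to $Y_0$ and unitarily transported via $\psi_-(J)$. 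Therefore $\|v\|_{[Y_0,Y_1]_{1/2}}=\|\psi_-(J)(\psi_+/\psi_-)^{1/2}(J)v\|_{X_0}$, and so $[Y_0,Y_1]_{1/2}=X_{(\psi_-\psi_+)^{1/2}}$.

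It remains to compare functions. For $r\ge1$ and $\theta_\mp:=(\sigma\mp\varepsilon-s_0)/(s_1-s_0)$, the function $(\psi_-\psi_+)^{1/2}(r)$ equals $r^{(\sigma-s_0)/(s_1-s_0)}\varphi(r^{1/(s_1-s_0)})$, since the $\varphi$-factors coincide. Thus it is exactly $\psi(r)$; on $(0,1)$ both functions equal $\varphi(1)$. This gives \eqref{16f46} for $W=\Omega$ and $W=S$. Formula \eqref{interp-half-isotropic-spaces} follows identically from \eqref{8f49}.

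The main obstacle is the rigorous justification of the reiteration step. One must verify that $[Y_0,Y_1]$ is a regular pair, namely that $Y_1$ is dense in $Y_0$ with continuous embedding. This requires $\psi_+/\psi_-\ge c>0$ on the spectrum, which holds because $\psi_+/\psi_-=r^{2\varepsilon/(s_1-s_0)}\ge1$ for $r\ge1$, and $J\ge1$ may be assumed after renormalization. One must also check that the generating operator is positive self-adjoint in $Y_0$. Both facts follow from the functional calculus of the single operator $J$. Finally, equivalence of norms (rather than equality) must be tracked throughout, which is harmless since interpolation respects equivalent renormings.
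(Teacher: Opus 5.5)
\textbf{Verdict: correct, by a self-contained route where the paper only cites.} The paper gives no argument of its own for Proposition~\ref{16lem7.5}. It cites \cite[Lemma~6.3]{LosMikhailetsMurach21CPAA} for \eqref{16f46} and \cite[Theorem~5.2]{MikhailetsMurach15ResMath1} for \eqref{interp-half-isotropic-spaces}.

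You derive both formulas inside the paper's own framework, and the argument holds up. You take one common pair $[H^{s_0,s_0/(2b)}(W),H^{s_1,s_1/(2b)}(W)]$ with $0\le s_0<\sigma-\varepsilon$ and $s_1>\sigma+\varepsilon$. Proposition~\ref{8prop5} then identifies the three spaces as $X_{\psi_-}$, $X_{\psi}$ and $X_{\psi_+}$. The identity $[X_{\psi_-},X_{\psi_+}]_{1/2}=X_{(\psi_-\psi_+)^{1/2}}$, with equality of norms, follows from the functional calculus, because all these spaces are domains of functions of the single operator $J$. Finally, $(\psi_-\psi_+)^{1/2}=\psi$ holds exactly, including on $(0,1)$. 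Replacing spaces by ones with equivalent norms before interpolating is legitimate, since $r^{1/2}$ is an interpolation parameter.

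\textbf{What each route buys.} Your route makes the statement a corollary of Proposition~\ref{8prop5} plus a short reiteration computation. It also shows why the hypothesis $\sigma>\varepsilon$ is imposed: it is exactly what allows a common lower endpoint $s_0\ge0$ below $\sigma-\varepsilon$. The cited sources work in a more general interpolation setting. That setting gives formulas between general pairs of generalized Sobolev spaces and for general interpolation parameters, not just the symmetric $1/2$ case needed here.

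\textbf{A minor simplification.} You do not need to renormalize so that $J\ge1$. By \eqref{25f16}, both $\psi_{\mp}$ equal $\varphi(1)$ on $(0,1)$, so $\psi_+/\psi_-\ge1$ on all of $(0,\infty)$. Moreover, $\psi_-(J)\colon Y_0\to X_0$ is onto, hence unitary, for two reasons: the spectrum of $J$ lies in some $[c_0,\infty)$ with $c_0>0$, and $1/\psi_-$ is bounded there because $\psi_-\in\mathcal{B}$.
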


Formula \eqref{16f46} is proved in \cite[Lemma~6.3]{LosMikhailetsMurach21CPAA}, and formula \eqref{interp-half-isotropic-spaces} is contained in \cite[Theorem~5.2]{MikhailetsMurach15ResMath1}.

We also use two general properties of the interpolation between normed spaces. They are formulated for the quadratic interpolation as follows:

\begin{proposition}\label{8prop1}
Let $X=[X_{0},X_{1}]$ be a regular pair of Hilbert spaces, and let $Y_{0}$ be a subspace of $X_{0}$. Then $Y_{1}:=X_{1}\cap Y_{0}$ is a subspace of $X_{1}$. Assume that there exists a linear mapping $P$ on $X_{0}$ such that $P$ is a projector of $X_{j}$ onto $Y_{j}$ for each $j\in\{0,\,1\}$. Then the pair $[Y_{0},Y_{1}]$ is regular, and $[Y_{0},Y_{1}]_{\psi}=X_{\psi}\cap Y_{0}$ with equivalence of norms for every interpolation parameter~$\psi\in\mathcal{B}$. Here, $X_{\psi}\cap Y_{0}$ is a subspace of $X_{\psi}$.
\end{proposition}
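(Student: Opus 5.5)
The plan is to follow the standard argument for interpolating subspaces cut out by a common projector, with $J$ the generating operator of the pair $X$. First I check that $Y_1 := X_1\cap Y_0$ is closed in $X_1$. If $v_k\in Y_1$ and $v_k\to v$ in $X_1$, then $v_k\to v$ in $X_0$ because the embedding $X_1\hookrightarrow X_0$ is continuous. Since $Y_0$ is closed in $X_0$, the limit $v$ lies in $Y_0$, hence in $Y_1$.

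Next, regularity of $[Y_0,Y_1]$. Continuity of $Y_1\hookrightarrow Y_0$ is inherited from $X$. For density, take $y\in Y_0$ and choose $x_k\in X_1$ with $x_k\to y$ in $X_0$. Then $Px_k\in Y_1$, because $P$ maps $X_1$ onto $Y_1$. Moreover $Px_k\to Py=y$ in $X_0$, because $P$ is bounded on $X_0$ and acts as the identity on $Y_0$ (being a projector onto $Y_0$). Separability of $Y_0$ and $Y_1$ is automatic, since they are subspaces of separable spaces.

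For the main identity, let $I$ denote the inclusion $Y_0\hookrightarrow X_0$. It is bounded $Y_j\to X_j$ for $j=0,1$. Since $\psi$ is an interpolation parameter, $I$ is bounded $Y_\psi\to X_\psi$, so $Y_\psi\hookrightarrow X_\psi\cap Y_0$ continuously. Conversely, $P$ is bounded $X_j\to Y_j$, so $P:X_\psi\to Y_\psi$ is bounded. For $v\in X_\psi\cap Y_0$ we have $v=Pv$, hence $v\in Y_\psi$ with $\|v\|_{Y_\psi}\le c\|v\|_{X_\psi}$. Together these give the equality $X_\psi\cap Y_0 = Y_\psi$ with equivalent norms.

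It remains to show that $X_\psi\cap Y_0$ is closed in $X_\psi$. I expect this to be the main subtle point, since it needs the continuous embedding $X_\psi\hookrightarrow X_0$. That embedding holds because $1/\psi$ is bounded on $[a,\infty)$ and the spectrum of $J$ lies in $[c_0,\infty)$ for some $c_0>0$, which gives $\|v\|_{X_0}\le c\,\|\psi(J)v\|_{X_0}$. Given the embedding, if $v_k\in X_\psi\cap Y_0$ and $v_k\to v$ in $X_\psi$, then $v_k\to v$ in $X_0$. Since $Y_0$ is closed in $X_0$, we get $v\in Y_0$, and so $X_\psi\cap Y_0$ is a subspace of $X_\psi$.
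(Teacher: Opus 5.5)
Your proof is correct, and it is essentially the standard argument from the sources the paper cites for this proposition (Triebel, Theorem~1.17.1/1, and Mikhailets--Murach); the paper gives no proof of its own. Those sources argue as you do: $Y_1$ is closed in $X_1$ because $X_1\hookrightarrow X_0$, density comes from $Px_k\to Py=y$, one interpolates the inclusion $Y_j\hookrightarrow X_j$ and the maps $P:X_j\to Y_j$ and uses $v=Pv$ on $Y_0$, and $X_\psi\cap Y_0$ is closed in $X_\psi$ because $X_\psi\hookrightarrow X_0$.
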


As for the proof of this proposition, we refer to   \cite[Theorem~1.17.1/1]{Triebel95} and \cite[Theorem~1.8]{MikhailetsMurach14}. Certainly, subspaces of Hilbert spaces are supposed to be closed. Generally speaking, we deal with nonorthogonal projectors onto subspaces.

\begin{proposition}\label{8prop2}
Let $[X_{0}^{(j)},X_{1}^{(j)}]$, with $j=1,\ldots,q$, be a finite collection of regular pairs of Hilbert spaces. Then
$$
\biggl[\,\bigoplus_{j=1}^{q}X_{0}^{(j)},\,
\bigoplus_{j=1}^{q}X_{1}^{(j)}\biggr]_{\psi}=\,
\bigoplus_{j=1}^{q}\bigl[X_{0}^{(j)},\,X_{1}^{(j)}\bigr]_{\psi}
$$
with equality of norms for every function $\psi\in\mathcal{B}$.
\end{proposition}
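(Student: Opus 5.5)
The plan is to work directly with the definition of $X_{\psi}$ through the generating operator and the spectral calculus. No interpolation property of $\psi$ is needed, which is consistent with the claim holding for every $\psi\in\mathcal{B}$.

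\emph{Step 1: the direct-sum pair is regular.} Put $X_{0}:=\bigoplus_{j}X_{0}^{(j)}$ and $X_{1}:=\bigoplus_{j}X_{1}^{(j)}$. The continuous embedding $X_{1}\hookrightarrow X_{0}$ and the density of $X_{1}$ in $X_{0}$ hold because both hold in each component. Each $X_{0}^{(j)}$ is separable, so the finite sum $X_{0}$ is separable. Hence $[X_{0},X_{1}]$ is a regular pair.

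\emph{Step 2: identify the generating operator.} Let $J_{j}$ be the generating operator of the pair $[X_{0}^{(j)},X_{1}^{(j)}]$. Define $J:=\mathrm{diag}(J_{1},\ldots,J_{q})$, acting in $X_{0}$ with domain $X_{1}$. This $J$ is self-adjoint and positive definite, since a finite orthogonal sum of such operators is again such an operator; its adjoint is computed componentwise. It also satisfies
\[
\|Jv\|_{X_{0}}^{2}=\sum_{j}\|J_{j}v_{j}\|^{2}=\sum_{j}\|v_{j}\|_{X_{1}^{(j)}}^{2}=\|v\|_{X_{1}}^{2}.
\]
The interpolation space must not depend on which admissible $J$ is used, so I check that the generating operator is unique. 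If $J'$ is another positive self-adjoint operator with domain $X_{1}$ and $\|J'v\|_{X_0}=\|v\|_{X_{1}}$, polarization gives $(J'v,J'w)_{X_{0}}=(v,w)_{X_{1}}$. Thus $J'^{2}$ is the self-adjoint operator associated with the closed form $(v,w)_{X_{1}}$ on $X_{0}$, and it is uniquely determined by that form. Then $J'$ is its unique positive square root, so $J'=J$.

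\emph{Step 3: spectral calculus.} Let $E_{j}$ be the spectral measure of $J_{j}$. Then $E(\Delta):=\bigoplus_{j}E_{j}(\Delta)$, over Borel sets $\Delta\subset(0,\infty)$, is a spectral measure whose integral $\int\lambda\,dE(\lambda)$ is $J$. By uniqueness of the spectral resolution, $E$ is the spectral measure of $J$. It follows that
\[
\psi(J)=\int\psi(\lambda)\,dE(\lambda)=\mathrm{diag}\bigl(\psi(J_{1}),\ldots,\psi(J_{q})\bigr).
\]
The domain equality follows from
\[
v\in\mathrm{Dom}\,\psi(J)\iff\int\psi^{2}\,d\|E(\lambda)v\|^{2}=\sum_{j}\int\psi^{2}\,d\|E_{j}(\lambda)v_{j}\|^{2}<\infty,
\]
and the right-hand condition holds exactly when each $v_{j}$ lies in $\mathrm{Dom}\,\psi(J_{j})$. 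Therefore $X_{\psi}=\bigoplus_{j}[X_{0}^{(j)},X_{1}^{(j)}]_{\psi}$ as sets. Moreover, $\|\psi(J)v\|^{2}=\sum_{j}\|\psi(J_{j})v_{j}\|^{2}$, which gives equality, not merely equivalence, of norms.

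The only step that needs genuine care is Step 3: the functional calculus commutes with finite orthogonal sums. I would justify it through the spectral measures as above, or equivalently by noting that every resolvent $(J-z)^{-1}$ splits componentwise. Steps 1 and 2 are routine.
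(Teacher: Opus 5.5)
Your proof is correct, and it is essentially the standard argument behind the result the paper cites (Mikhailets--Murach, Theorem~1.8). There, too, the generating operator of the direct-sum pair is shown to be $J=\mathrm{diag}(J_{1},\ldots,J_{q})$, and the Borel functional calculus splits componentwise, so $\psi(J)=\mathrm{diag}(\psi(J_{1}),\ldots,\psi(J_{q}))$; this gives equality of the spaces with equal norms for every $\psi\in\mathcal{B}$. The paper itself gives no proof beyond that citation.
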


The proof is given in \cite[Theorem~1.8]{MikhailetsMurach14}, e.g.

Our proof of Theorem~\ref{25th4.4} relies on a version of H\"ormander's embedding theorem \cite[Theorem~2.2.7]{Hermander63} for anisotropic spaces considered in Section~\ref{sec-spaces}.

\begin{proposition}\label{prop-embedding}
Let $q\in\mathbb{Z}$, $q\geq0$, $\sigma:=q+b+n/2$, and $\varphi\in\mathcal{M}$. The following two assertions are true:
\begin{itemize}
\item[(i)] If $\varphi$ satisfies \eqref{9f4.7}, then every function $w\in H^{\sigma,\sigma/(2b);\varphi}(\mathbb{R}^{n+1})$ has the following property: this function and all its generalized partial derivatives $D_{x}^{\alpha}\partial_{t}^{\beta}w(x,t)$ with $|\alpha|+2b\beta\leq q$ are continuous on $\mathbb{R}^{n+1}$.
\item[(ii)] Let $V$ be a nonempty open subset of $\mathbb{R}^{n+1}$, and let an integer $i$ be such that $1\leq i\leq n$. If every function $w\in H^{\sigma,\sigma/(2b);\varphi}(\mathbb{R}^{n+1})$ with $\mathrm{supp}\,w\subset V$ satisfies the condition $\partial_{i}^{j}w\in C(\mathbb{R}^{n+1})$ for each $j\in\mathbb{Z}$ with $0\leq j\leq q$, then $\varphi$ satisfies \eqref{9f4.7}. Here, $\partial_{i}^{j}w$ denotes the generalized partial derivative $(\partial^{j}w)/\partial{x_{i}^{j}}$ of the function $w=w(x_{1},\ldots,x_{n},t)$.
\end{itemize}
\end{proposition}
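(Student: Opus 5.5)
We prove Proposition~\ref{prop-embedding} by the Fourier-transform argument behind H\"ormander's theorem \cite[Theorem~2.2.7]{Hermander63}, adapted to the anisotropic weight
$$
\rho(\xi,\eta):=\bigl(1+|\xi|^2+|\eta|^{1/b}\bigr)^{1/2},\qquad \xi\in\mathbb{R}^{n},\ \eta\in\mathbb{R}.
$$
Everything reduces to the convergence of one weighted integral, which we compute in anisotropic polar coordinates.

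\emph{Part (i).} Let $w\in H^{\sigma,\sigma/(2b);\varphi}(\mathbb{R}^{n+1})$ and $|\alpha|+2b\beta\leq q$. The Fourier transform of $D_x^\alpha\partial_t^\beta w$ is, up to a unimodular constant, $\xi^\alpha\eta^\beta\widetilde w$. Since $|\xi^\alpha\eta^\beta|\leq\rho^{|\alpha|+2b\beta}\leq\rho^{q}$, the Cauchy--Schwarz inequality gives
$$
\int|\xi^\alpha\eta^\beta\widetilde w|\,d\xi\,d\eta\leq\|w,H^{\sigma,\sigma/(2b);\varphi}(\mathbb{R}^{n+1})\|\Bigl(\int\rho^{2q-2\sigma}\varphi^{-2}(\rho)\,d\xi\,d\eta\Bigr)^{1/2}.
$$
Here $2q-2\sigma=-2b-n$. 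To evaluate the last integral, change variables by $\eta=\pm\zeta^{2b}$, so $d\eta=2b\zeta^{2b-1}d\zeta$, and pass to polar coordinates in $(\xi,\zeta)\in\mathbb{R}^{n+1}$ with radius $r$. The Jacobian factor $\zeta^{2b-1}$ contributes $r^{2b-1}$ times a bounded angular factor, and $\rho\asymp 1+r$. The integral is therefore comparable to
$$
\int_0^\infty (1+r)^{-2b-n}\varphi^{-2}(1+r)\,r^{n}\,r^{2b-1}\,dr\asymp\int_1^\infty\frac{dy}{y\,\varphi^2(y)}+\mathrm{const},
$$
which is finite by \eqref{9f4.7}. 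The local boundedness of $\varphi$ and $1/\varphi$ handles $r\leq1$. Hence $\xi^\alpha\eta^\beta\widetilde w\in L_1(\mathbb{R}^{n+1})$, and its inverse Fourier transform is a continuous (indeed bounded, uniformly continuous) function that coincides with $D_x^\alpha\partial_t^\beta w$.

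\emph{Part (ii).} This is the main obstacle, and we argue by duality and the closed graph theorem. Translate and dilate so that $V$ contains a ball; a smooth cutoff supported in $V$ acts boundedly on $H^{\sigma,\sigma/(2b);\varphi}$, since multiplication by $C_0^\infty$ functions is bounded there (for instance by interpolation via Proposition~\ref{8prop5}). The subspace of $H^{\sigma,\sigma/(2b);\varphi}(\mathbb{R}^{n+1})$ consisting of functions supported in a fixed compact $K\subset V$ is closed. By hypothesis, $w\mapsto\partial_i^{q}w$ maps it into $C(K)$, and this map has closed graph because convergence in either space implies convergence in $\mathcal{D}'$. So it is bounded, and in particular $|\partial_i^q w(x_0,t_0)|\leq c\|w\|$ for a fixed interior point. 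Now take $w=\chi\,v$, where $\chi$ is a cutoff equal to $1$ near $(x_0,t_0)$ and $v$ has Fourier transform $\widetilde v=\xi_i^{q}\rho^{-2\sigma}\varphi^{-2}(\rho)\mathbf 1_{\rho\leq R}$. The point evaluation of $\partial_i^q v$ is then comparable to $I_R:=\int_{\rho\leq R}\xi_i^{2q}\rho^{-2\sigma}\varphi^{-2}d\xi\,d\eta$, while $\|v\|\asymp I_R^{1/2}$. Cutoff commutators are of lower order and are absorbed by the compact embedding into $H^{\sigma-1,(\sigma-1)/(2b);\varphi}$. This yields $I_R\leq c\,I_R^{1/2}+C$, so $I_R$ is bounded in $R$. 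Since $\xi_i^{2q}\asymp\rho^{2q}$ on a cone of positive angular measure, the polar computation of Part (i) shows that $\sup_R I_R<\infty$ forces \eqref{9f4.7}.

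The delicate points are two. First, the commutator estimate must be uniform in $R$. Second, the test function should be chosen without the cutoff if possible: take $v$ from the bounded-$R$ family and estimate $\|(1-\chi)v\|$ through the decay of $v$, which follows from the smoothness of $\widetilde v$ truncated smoothly rather than sharply.
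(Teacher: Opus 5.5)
Your proof is correct and follows essentially the route the paper relies on: the paper gives no proof of its own but cites this as an anisotropic version of H\"ormander's embedding theorem (Theorem~2.2.7 of his 1963 book). Your Cauchy--Schwarz argument for (i), the closed-graph argument with the test functions $\widetilde v=\xi_i^{q}\rho^{-2\sigma}\varphi^{-2}(\rho)\mathbf{1}_{\rho\leq R}$ for (ii), and the reduction to $\int_1^\infty dy/(y\varphi^2(y))$ via $\eta=\pm\zeta^{2b}$ and polar coordinates are exactly that method. The ``delicate points'' at the end of your Part~(ii) are not real, however: since $\chi\equiv1$ near $(x_0,t_0)$ and $v$ is smooth, you have $\partial_i^{q}(\chi v)(x_0,t_0)=\partial_i^{q}v(x_0,t_0)$ exactly and $\|\chi v\|\leq C_\chi\|v\|$, so the closed-graph bound gives $I_R\leq c\,I_R^{1/2}$ directly, with no commutator terms and no compact-embedding step. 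Dropping the cutoff is not an option either, because the hypothesis only applies to $w$ supported in $V$.
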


This result is proved in \cite[Lemma~8.1]{LosMikhailetsMurach17CPAA}.

\section{Proofs}\label{sec-proofs}

Our proof of Theorem~\ref{25th4.1} relies on a lemma concerning the quadratic interpolation between the ranges of some isomorphisms \eqref{25f11} considered in the $\varphi(\cdot)\equiv1$ case. Let $\{J_l:1\leq l\in\mathbb{Z}\}$ stand for the collection of all connected components of the set $(\sigma_0,\infty)\setminus E$. Recall that the set $E$ was defined by \eqref{set-E}.

\begin{lemma}\label{25lem4.1}
Let $1\leq l\in\mathbb{Z}$, and suppose that real numbers $s_0,s,s_1\in J_{l}$ satisfy $s_0<s<s_1$ and that $\varphi\in\mathcal{M}$. Let $\psi$ be the interpolation parameter defined by \eqref{25f16}. Then
\begin{equation}\label{25f42}
\mathcal{Q}^{s,s/(2b);\varphi}=\,
\bigl[\mathcal{Q}^{s_0,s_0/(2b)},
\mathcal{Q}^{s_1,s_1/(2b)}\bigr]_{\psi}
\end{equation}
holds true with equivalence of norms.
\end{lemma}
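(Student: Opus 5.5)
Since $s_0,s,s_1$ all lie in the same component $J_l$, the compatibility conditions \eqref{25f8} are literally the same collection of relations for $s_0$, $s$ and $s_1$: the same indices $j$ and $r$, and the same operators $\mathcal{B}_{j,r}$ and $v_{i,\mu}$. Hence each of the three spaces $\mathcal{Q}^{s_0,s_0/(2b)}$, $\mathcal{Q}^{s,s/(2b);\varphi}$, $\mathcal{Q}^{s_1,s_1/(2b)}$ is the subspace of the corresponding $\mathcal{H}$-space cut out by one fixed finite system of linear equations. The plan is to realize this system as the kernel of a single operator and to produce a common projector onto the kernel. Then Proposition~\ref{8prop1} reduces \eqref{25f42} to interpolation of the ambient spaces $\mathcal{H}$.

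\emph{Step 1: interpolating the ambient spaces.} By Proposition~\ref{8prop2} and Proposition~\ref{8prop5}, applied componentwise, we get
\[
\bigl[\mathcal{H}^{s_0,s_0/(2b)},\mathcal{H}^{s_1,s_1/(2b)}\bigr]_{\psi}=\mathcal{H}^{s,s/(2b);\varphi}.
\]
We use \eqref{25f22} with $W=\Omega$ and $\lambda=0$ for $f$, with $W=S$ and $\lambda=-l_j-1/2$ for $g_j$, and \eqref{8f49} with $\lambda=2b\varkappa_k-2br-b$ for $h_{k,r}$. All these shifts satisfy $\lambda\ge -s_0$ because $s_0>\sigma_0$: one has $s_0-l_j-1/2>1/2>0$ and $s_0+2b\varkappa_k-2br-b\ge s_0+b>0$. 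The same $\psi$ from \eqref{25f16} serves every component, since it depends only on $s_0,s,s_1$ and $\varphi$. Regularity of the pair follows from density of smooth functions.

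\emph{Step 2: the projector, which is the main obstacle.} Let $R$ denote the map sending $(f,g,h)$ to the collection
\[
\bigl(\partial_t^r g_j\!\upharpoonright\!\Gamma-\mathcal{B}_{j,r}(\dots)\!\upharpoonright\!\Gamma\bigr)_{j,r}\in
\bigoplus_{j,r}H^{\,\cdot-l_j-1/2-2br-b}(\Gamma).
\]
This map is bounded on $\mathcal{H}^{\sigma,\sigma/(2b)}$ for every $\sigma\in J_l$, and hence on $\mathcal{H}^{s,s/(2b);\varphi}$ through the embedding into $\mathcal{H}^{s-\varepsilon,(s-\varepsilon)/(2b)}$. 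The spaces $\mathcal{Q}$ are exactly $\ker R$.

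To build a projector $P$, I will construct a bounded right inverse $T$ of $R$ that acts only on the $g$-components and does not depend on $\sigma$. The candidate is a lifting operator for time traces on $S$. It maps data $(w_{j,r})$ on $\Gamma$ to a function $g_j$ on $S$ with $\partial_t^r g_j|_{t=0}=w_{j,r}$ for the relevant $r$. For this I take a standard extension operator (for instance the Slobodetskii/Lions--Magenes type, built in local charts from a fixed kernel in $\eta$) that is bounded $H^{\sigma-l_j-1/2-2br-b}(\Gamma)\to H^{\sigma-l_j-1/2,(\sigma-l_j-1/2)/(2b)}(S)$ simultaneously for all $\sigma$ in a range containing $[s_0,s_1]$. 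Because $T$ leaves $f$ and $h$ unchanged, the term $\mathcal{B}_{j,r}$ is unaffected, so $RT=I$. Then $P:=I-TR$ is a projector of $\mathcal{H}^{\sigma,\sigma/(2b)}$ onto $\mathcal{Q}^{\sigma,\sigma/(2b)}$ for $\sigma=s_0$ and $\sigma=s_1$. The delicate point is verifying that a single extension operator is bounded for all needed $\sigma$ and all orders $r$ at once, including the mixed anisotropic norm. I would treat this by first reducing to the half-space in $t$ with the Fourier variable $\eta$ and using a kernel with vanishing moments. The fact that $\sigma$ stays away from $E$ ensures the trace orders do not change.

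\emph{Step 3: conclusion.} Proposition~\ref{8prop1} applied with $X_j=\mathcal{H}^{s_j,s_j/(2b)}$, $Y_0=\mathcal{Q}^{s_0,s_0/(2b)}$ and the above $P$ yields
\[
[\mathcal{Q}^{s_0,s_0/(2b)},\mathcal{Q}^{s_1,s_1/(2b)}]_\psi=\mathcal{H}^{s,s/(2b);\varphi}\cap\mathcal{Q}^{s_0,s_0/(2b)}.
\]
Here $Y_1=X_1\cap Y_0=\mathcal{Q}^{s_1,s_1/(2b)}$, because the conditions coincide. Finally, by Step 1 and the identity of the conditions within $J_l$, the right-hand side equals $\mathcal{Q}^{s,s/(2b);\varphi}$, with norms equivalent to those of $\mathcal{H}^{s,s/(2b);\varphi}$. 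This proves \eqref{25f42}.
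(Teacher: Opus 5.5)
Your proposal is correct and follows essentially the same route as the paper. You interpolate the ambient spaces $\mathcal{H}$ componentwise via Propositions~\ref{8prop5} and~\ref{8prop2}, then apply Proposition~\ref{8prop1} with the projector $P=I-TR$, which modifies only the $g$-components through a time-trace lifting $T$ independent of the smoothness index; this is exactly the paper's map $(f,g,h)\mapsto(f,g^{*},h)$. The only difference is that the paper does not construct $T$ but cites a lemma from the authors' earlier work, giving a right inverse of the Cauchy data operator on $S$ that does not depend on $\lambda>2br-b$, which is precisely the ``delicate point'' you flag.
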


\begin{proof}
Owing to Propositions \ref{8prop5} and \ref{8prop2}, we conclude that
\begin{equation}\label{25f45}
\bigl[\mathcal{H}^{s_0,s_0/(2b)},
\mathcal{H}^{s_1,s_1/(2b)}\bigr]_{\psi}=
\mathcal{H}^{s,s/(2b);\varphi}
\end{equation}
with equivalence of norms. To deduce required relation \eqref{25f42} from \eqref{25f45} with the help of Proposition~\ref{8prop1}, we will build a linear mapping $P$ that acts on the space $\mathcal{H}^{s_0,s_0/(2b)}$ and such that $P$ is a projector of the space $\mathcal{H}^{s_i,s_i/(2b)}$ onto its subspace $\mathcal{Q}^{s_i,s_i/(2b)}$ for each $i\in\{0,1\}$.

Whatever $j\in\{1,\dots,m\}$, the set
\begin{equation*}
\biggl\{r\in\mathbb{Z}:0\leq r<\frac{s-l_j-1/2-b}{2b}\biggr\}
\end{equation*}
does not depend on $s\in J_l$. Recall that this set appeared in compatibility conditions \eqref{25f8}. Let $q_{l,j}^{\star}$ denote the number of its elements in the $s\in J_l$ case, and put $q_{l,j}:=q_{l,j}^{\star}-1$ for convenience. Given $\bigl(f,g,h\bigr)\in\mathcal{H}^{s_0,s_0/(2b)}$, we let
\begin{equation*}
\left\{
  \begin{array}{ll}
    g^{*}_{j}:=g_j & \hbox{if}\quad q_{l,j}=-1,\\
    g^{*}_{j}:=g_j+T(w_{j,0},\dots,w_{j,q_{l,j}})
    & \hbox{if}\quad q_{l,j}\geq0.
  \end{array}
\right.
\end{equation*}
Here,
\begin{align*}
w_{j,0}&:=\mathcal{B}_{j,0}(v_{1,0},\dots,v_{1,[l_j/(2b)]+\varkappa_1},
\dots,v_{N,0},\dots,v_{N,[l_j/(2b)]+\varkappa_N})\!\upharpoonright\!\Gamma
-g_j\!\upharpoonright\!\Gamma,\\
&\dots\\
w_{j,q_{l,j}}&:=\mathcal{B}_{j,q_{l,j}}(v_{1,0},\dots,
v_{1,[l_j/(2b)]+\varkappa_1+q_{l,j}},\dots,v_{N,0},\dots,
v_{N,[l_j/(2b)]+\varkappa_N+q_{l,j}})\!\upharpoonright\!\Gamma
-\partial_{t}^{q_{l,j}}g_j\!\upharpoonright\!\Gamma,
\end{align*}
with the functions $v_{i,\mu}$ and PDOs $\mathcal{B}_{j,r}$ being defined by \eqref{29f9} and \eqref{29f9B}. Moreover, $T$ is a bounded linear operator
\begin{equation*}
T: \bigoplus_{k=0}^{r-1}
H^{\lambda-2bk-b}(\Gamma)\rightarrow H^{\lambda,\lambda/(2b)}(S),
\end{equation*}
with $r:=q_{l,j}^\star$ and $\lambda>2br-b$, such that $T$ is right-inverse to the Cauchy data operator
\begin{equation*}
R:\omega\mapsto\bigl(\omega\!\upharpoonright\!\Gamma,
\partial_{t}\omega\!\upharpoonright\!\Gamma,\dots,
\partial^{r-1}_{t}\omega\!\upharpoonright\!\Gamma\bigr)
\end{equation*}
where $\omega\in H^{\lambda,\lambda/(2b)}(S)$. This operator $T$ exists and does not depend on $\lambda>2br-b$ (see, e.g., \cite[лема~6.1]{LosMikhailetsMurach21CPAA}).

Consider the linear mapping $P:\bigl(f,g,h\bigr)\mapsto\bigl(f,g^*,h\bigr)$ defined for every $(f,g,h\bigr)\in\mathcal{H}^{s_0,s_0/(2b)}$; here, $g^*:=(g^*_1,\dots,g^*_m)$. This mapping is required. Indeed, its restriction to each space $\mathcal{H}^{s_{i},s_{i}/(2b)}$, with $i\in\{0,1\}$, is a bounded operator on this space, which stems from the boundedness of partial differential operators, the trace operator, and the operator $T$ on relevant Sobolev spaces. Moreover, it follows from compatibility conditions \eqref{25f8} that $P(f,g,h)\in\mathcal{Q}^{s_{i},s_{i}/(2b)}$ for every $(f,g,h)\in\mathcal{H}^{s_{i},s_{i}/(2b)}$. Finally, $P(f,g,h)=(f,g,h)$ for any vector $(f,g,h)\in\mathcal{Q}^{s_{i},s_{i}/(2b)}$ because this vector satisfies conditions \eqref{25f8} and since they imply that $w_{j,0}=\cdots=w_{j,q_{l,j}}=0$ and, hence, $g_j^*=g_j$ whatever $j\in\{1,\dots,m\}$.

Owing to Proposition~\ref{8prop1} and formula \eqref{25f45}, we conclude that the following equalities hold true with equivalence of norms:
\begin{align*}
\bigl[\mathcal{Q}^{s_0,s_0/(2b)},
\mathcal{Q}^{s_1,s_1/(2b)}\bigr]_{\psi}&=
\bigl[\mathcal{H}^{s_0,s_0/(2b)},
\mathcal{H}^{s_1,s_1/(2b)}\bigr]_{\psi}\cap
\mathcal{Q}^{s_0,s_0/(2b)}=\\
&=\mathcal{H}^{s,s/(2b);\varphi}\cap
\mathcal{Q}^{s_0,s_0/(2b)}=\mathcal{Q}^{s,s/(2b);\varphi};
\end{align*}
i.e. we have proved required relation \eqref{25f42}. Here, the last equation is true because $s,s_0\in J_l$ and, hence, all elements of  $\mathcal{Q}^{s_0,s_0/(2b)}$ and $\mathcal{Q}^{s,s/(2b);\varphi}$ satisfy  the same compatibility conditions. We also take into account the continuous embedding $\mathcal{H}^{s,s/(2b);\varphi}\hookrightarrow
\mathcal{H}^{s_0,s_0/(2b)}$.
\end{proof}

\begin{proof}[Proof of Theorem~$\ref{25th4.1}$.] Let $s>\sigma_0$ and $\varphi\in\mathcal{M}$. Assume first that $s\notin E$. Then $s\in J_{l}$ for a certain integer $l\geq1$. Choose numbers $s_0,s_1\in J_{l}$ such that $s_0<s<s_1$ and that $s_i+1/2\notin\mathbb{Z}$ and $s_i/(2b)+1/2\notin\mathbb{Z}$ for each $i\in\{0,1\}$. According to Zhitarashu's result  \cite[Theorem~5.7]{EidelmanZhitarashu98}, mapping \eqref{4f9} extends uniquely by continuity to (topological) isomorphisms
\begin{equation*}
\Lambda:\bigoplus_{k=1}^{N}
H^{s_i+2b\varkappa_k,(s_i+2b\varkappa_k)/(2b)}(\Omega)
\leftrightarrow
\mathcal{Q}^{s_i,s_i/(2b)},\quad\mbox{where}\quad i\in\{0,1\}.
\end{equation*}
Applying the quadratic interpolation with function parameter \eqref{25f16} to these isomorphisms, we obtain another isomorphism
\begin{equation}\label{16f50}
\begin{aligned}
\Lambda&:\bigoplus_{k=1}^{N}
H^{s+2b\varkappa_k,(s+2b\varkappa_k)/(2b);\varphi}(\Omega)\\
&\quad=\Biggl[\bigoplus_{k=1}^{N}
H^{s_0+2b\varkappa_k,(s_0+2b\varkappa_k)/(2b)}(\Omega),
\bigoplus_{k=1}^{N}
H^{s_1+2b\varkappa_k,(s_1+2b\varkappa_k)/(2b)}(\Omega)\Biggr]_{\psi}\\
&\leftrightarrow
\bigl[\mathcal{Q}^{s_0,s_0/(2b)},
\mathcal{Q}^{s_1,s_1/(2b)}\bigr]_{\psi}=\mathcal{Q}^{{s,s/(2b)};\varphi}.
\end{aligned}
\end{equation}
Here, the equalities hold true with equivalence of norms due to Propositions \ref{8prop5} and \ref{8prop2} and Lemma~\ref{25lem4.1}. Operator \eqref{16f50} is an extension of mapping \eqref{4f9} by continuity because the set $(C^{\infty}(\overline{\Omega}))^{N}$ is dense in the domain of this operator. The theorem is proved in the case where $s\notin E$.

Suppose now that $s\in E$. We arbitrarily choose a number $\varepsilon\in(0,1/2)$. Since $s\mp\varepsilon\notin E$ and $s-\varepsilon>\sigma_0$, we have proved isomorphisms
\begin{equation*}
\Lambda:\bigoplus_{k=1}^{N}
H^{s\mp\varepsilon+2b\varkappa_k,(s\mp\varepsilon+2b\varkappa_k)/(2b);
\varphi}(\Omega)\leftrightarrow
\mathcal{Q}^{s\mp\varepsilon,(s\mp\varepsilon)/(2b);\varphi}.
\end{equation*}
Applying the quadratic interpolation with number parameter $1/2$ to them, we obtain another isomorphism
\begin{equation}\label{8f38}
\begin{aligned}
\Lambda&:\bigoplus_{k=1}^{N}
H^{s+2b\varkappa_k,(s+2b\varkappa_k)/(2b);\varphi}(\Omega)\\
&\quad=\Biggl[\bigoplus_{k=1}^{N}
H^{s-\varepsilon+2b\varkappa_k,(s-\varepsilon+2b\varkappa_k)/(2b);
\varphi}(\Omega),
\bigoplus_{k=1}^{N}
H^{s+\varepsilon+2b\varkappa_k,(s+\varepsilon+2b\varkappa_k)/(2b);
\varphi}(\Omega)\Biggr]_{1/2}\\
&\leftrightarrow
\bigl[\mathcal{Q}^{s-\varepsilon,(s-\varepsilon)/(2b);\varphi},
\mathcal{Q}^{s+\varepsilon,(s+\varepsilon)/(2b);\varphi}\bigr]_{1/2}=
\mathcal{Q}^{s,s/(2b);\varphi}.
\end{aligned}
\end{equation}
Here, the equalities hold true with equivalence of norms by Propositions \ref{16lem7.5} and \ref{8prop2} and formula \eqref{25f10}.  Operator \eqref{8f38} is an extension of mapping \eqref{4f9} by continuity as explained in the previous case.
\end{proof}

\begin{remark}\label{rem-range-independence}
Let $s\in E$. It follows directly from isomorphism \eqref{8f38} that the space $\mathcal{Q}^{s,s/(2b);\varphi}$ defined by \eqref{25f10} is independent of the number $\varepsilon\in(0,1/2)$. This space  is continuously embedded in $\mathcal{H}^{s,s/(2b);\varphi}$. Indeed, choosing $\varepsilon\in(0,1/2)$, we get the continuous embeddings
\begin{equation*}
\mathcal{Q}^{s\mp\varepsilon-2m,(s\mp\varepsilon-2m)/(2b);\varphi}
\hookrightarrow
\mathcal{H}^{s\mp\varepsilon-2m,(s\mp\varepsilon-2m)/(2b);\varphi}
\end{equation*}
in view of $s\mp\varepsilon\in(0,\sigma_{0})\setminus E$ and the definition of the left-hand spaces. Applying the quadratic interpolation with number parameter $1/2$ to these bounded embedding operators, we obtained the required continuous embedding
\begin{align*}
\mathcal{Q}^{s,s/(2b);\varphi}&=
\bigl[\mathcal{Q}^{s-\varepsilon-2m,(s+\varepsilon-2m)/(2b);\varphi},
\mathcal{Q}^{s+\varepsilon-2m,(s+\varepsilon-2m)/(2b);\varphi}\bigr]_{1/2}\\
&\hookrightarrow
\bigl[\mathcal{H}^{s-\varepsilon-2m,(s\mp\varepsilon-2m)/(2b);\varphi},
\mathcal{H}^{s+\varepsilon-2m,(s\mp\varepsilon-2m)/(2b);\varphi}\bigr]_{1/2}
=\mathcal{H}^{s,s/(2b);\varphi}
\end{align*}
Here, the equalities hold true with equivalence of norms by formula \eqref{25f10} and Propositions \ref{16lem7.5} and \ref{8prop2}.
\end{remark}

\begin{proof}[Proof of Theorem~$\ref{25th4.3}$.]
First we will show that hypotheses \eqref{25f13}--\eqref{25f15} entails the following implication:
\begin{equation}\label{25f54}
\begin{aligned}
&u\in\bigoplus_{k=1}^{N}
H^{s-\lambda+2b\varkappa_k,(s-\lambda+2b\varkappa_k)/(2b);\varphi}_
{\mathrm{loc}}(\Omega_0,\Omega')\\
&\Longrightarrow
u\in\bigoplus_{k=1}^{N}
H^{s-\lambda+1+2b\varkappa_k,(s-\lambda+1+2b\varkappa_k)/(2b);\varphi}_
{\mathrm{loc}}(\Omega_0,\Omega').
\end{aligned}
\end{equation}
Here, $\lambda$ is any positive integer that satisfies   $s-\lambda+1>\sigma_0$.

We arbitrarily choose a function $\chi\in C^\infty(\overline\Omega)$ subject to    $\mbox{supp}\,\chi\subset\Omega_0\cup\Omega'$. For $\chi$, there exists a function $\eta\in C^\infty(\overline\Omega)$ such that $\mbox{supp}\,\eta\subset\Omega_0\cup\Omega'$ and that
$\eta=1$ in a neighbourhood of $\mbox{supp}\,\chi$. Rearranging the differential operators $A$, $B$, and $C$ with the operator of multiplication by $\chi$, we obtain the following:
\begin{equation}\label{16f55}
\begin{aligned}
\Lambda(\chi u)&=\Lambda(\chi\eta u)=
\chi\Lambda(\eta u)+ \Lambda'(\eta u)=\\
&=\chi\Lambda u+\Lambda'(\eta u)=\chi(f,g,h)+
\Lambda'(\eta u).
\end{aligned}
\end{equation}
Here, $\Lambda':=(A',B',C')$, with $A'$, $B'$, and $C'$ being matrix differential operators of the same structure as their unprimed counterparts and consisting of lower order entries. Namely,
\begin{equation*}
A'=(A'_{j,k})_{j,k=1}^N,\quad\quad
B'=\bigl(B'_{j,k}\bigr)_{\substack{j=1,\ldots,m
\\k=1,\ldots,N}},
\end{equation*}
and
\begin{equation*}
C'=(C'_{1,0},...,C'_{1,\varkappa_1-1},...,C'_{N,0},...,C'_{N,\varkappa_N-1}),
\end{equation*}
where
\begin{gather*}
A'_{j,k}(x,t,D_x,\partial_t):=
\sum_{|\alpha|+2b\beta\leq 2b\varkappa_{k}-1}
a^{\alpha,\beta}_{j,k,1}(x,t)\,D^\alpha_x\partial^\beta_t,\\
B'_{j,k}(x,t,D_x,\partial_t):=
\sum\limits_{|\alpha|+2b\beta\leq l_j+2b\varkappa_{k}-1}
b^{\alpha,\beta}_{j,k,1}(x,t)\,D^\alpha_x\partial^\beta_t,
\end{gather*}
with $B'_{j,k}=0$ if $l_j+2b\varkappa_{k}\leq0$, and
\begin{equation*}
C'_{k,0}=0\quad\mbox{and}\quad
C'_{k,r}(x,\partial_t)=\sum_{\beta=0}^{r-1}
c_{k,r,\beta}(x)\partial^{\beta}_t
\end{equation*}
whenever $1\leq k\leq N$ and $1\leq r\leq\varkappa_k-1$. Here, all $a^{\alpha,\beta}_{j,k,1}\in C^{\infty}(\overline{\Omega})$, $b^{\alpha,\beta}_{j,k,1}\in C^{\infty}(\overline{S})$, and $c_{k,r,\beta}\in C^{\infty}(\overline{G})$.

Whatever $\sigma\geq\sigma_0-1$, the linear operator $\Lambda'$ is bounded on the pair of spaces
\begin{equation}\label{16f59}
\Lambda':\bigoplus_{k=1}^{N}H^{\sigma+2b\varkappa_k,(\sigma+2b\varkappa_k)/(2b);\varphi}(\Omega)\to
\mathcal{H}^{\sigma+1,\,(\sigma+1)/(2b);\varphi}.
\end{equation}
If $\varphi(\cdot)\equiv1$, this is due to known properties of differential and trace operators on anisotropic Sobolev spaces \cite[Chapter~I, Lemma~4, and Chapter~II, Theorems 3 and 7]{Slobodetskii58}. The boundedness in the general case of $\varphi\in\mathcal{M}$ follows directly from the $\varphi(\cdot)\equiv1$ case by the quadratic interpolation in view of Propositions~\ref{8prop5} and \ref{8prop2}.

According to hypotheses \eqref{25f13}--\eqref{25f15}, the inclusion $\chi\,(f,g,h)\in\mathcal{H}^{s,s/(2b);\varphi}$ holds true. Owing to \eqref{16f59} with $\sigma:=s-\lambda$, we obtain the implication
\begin{align*}
u\in\bigoplus_{k=1}^{N}
H^{s-\lambda+2b\varkappa_k,(s-\lambda+2b\varkappa_k)/(2b);\varphi}_
{\mathrm{loc}}(\Omega_0,\Omega')\Longrightarrow
\Lambda'(\eta u)\in\mathcal{H}^{s-\lambda+1,(s-\lambda+1)/(2b);\varphi}.
\end{align*}
We hence conclude by \eqref{16f55} that
\begin{equation}\label{16f60}
\begin{aligned}
&u\in\bigoplus_{k=1}^{N}
H^{s-\lambda+2b\varkappa_k,(s-\lambda+2b\varkappa_k)/(2b);\varphi}_
{\mathrm{loc}}(\Omega_0,\Omega')\\
&\Longrightarrow\Lambda(\chi u)\in
\mathcal{H}^{s-\lambda+1,(s-\lambda+1)/(2b);\varphi}.
\end{aligned}
\end{equation}

Let us show that
\begin{equation}\label{16f61}
\Lambda(\chi u)\in\mathcal{H}^{\sigma,\sigma/(2b);\varphi}
\;\Longrightarrow\;
\Lambda(\chi u)\in\mathcal{Q}^{\sigma,\sigma/(2b);\varphi}
\end{equation}
whatever $\sigma>\sigma_0$. This implication allows us to prove \eqref{25f54} in view of Theorem~\ref{25th4.2}.

Suppose that $\Lambda(\chi u)\in\mathcal{H}^{\sigma,\sigma/(2b);\varphi}$ for certain $\sigma>\sigma_0$. Since $\mathrm{dist}(\mathrm{supp}\,\chi,\Gamma)>0$, the equality $\Lambda(\chi u)=0$ is valid in a certain neighbourhood of~$\Gamma$. Hence, the vector $\Lambda(\chi u)$ satisfies compatibility conditions \eqref{25f8} in which $(f,g,h):=\Lambda(\chi u)$ and $s:=\sigma$. Thus, $\Lambda(\chi u)\in\mathcal{Q}^{\sigma,\sigma/(2b);\varphi}$ in the $\sigma\notin E$ case due to the definition of
$\mathcal{Q}^{\sigma,\sigma/(2b);\varphi}$.

Let us consider the case where $\sigma\in E$. We choose a function $\chi_1\in C^{\infty}(\overline\Omega)$ such that $\chi_1=0$ in a neighbourhood of $\Gamma$ and that $\chi_1=1$ in a neighbourhood of $\mathrm{supp}\,\chi$. The linear mapping
$M_{\chi_1}:(f,g,h)\mapsto\chi_1 (f,g,h)$ sets bounded operators
\begin{equation*}
M_{\chi_1}:\mathcal{H}^{\sigma\mp\varepsilon,(\sigma\mp\varepsilon)/(2b);\varphi}
\to\mathcal{Q}^{\sigma\mp\varepsilon,(\sigma\mp\varepsilon)/(2b);\varphi}
\end{equation*}
whenever $0<\varepsilon<1/2$ (then $\sigma\mp\varepsilon\notin E$, and $(f,g,h)$ satisfies compatibility conditions \eqref{25f8} corresponding to $s:=\sigma\mp\varepsilon$). Applying the quadratic interpolation with number parameter $1/2$ to these operators, we infer by Propositions \ref{16lem7.5} and \ref{8prop2} that the operator $M_{\chi_1}$ is also bounded on the pair of spaces
\begin{equation}\label{16f65}
\begin{aligned}
M_{\chi_1}:\mathcal{H}^{\sigma,\sigma/(2b);\varphi}&=
\bigl[
\mathcal{H}^{\sigma-\varepsilon,(\sigma-\varepsilon)/(2b);\varphi},
\mathcal{H}^{\sigma+\varepsilon,(\sigma+\varepsilon)/(2b);\varphi}
\bigr]_{1/2}\\
&\to\bigl[
\mathcal{Q}^{\sigma-\varepsilon,(\sigma-\varepsilon)/(2b);\varphi},
\mathcal{Q}^{\sigma+\varepsilon,(\sigma+\varepsilon)/(2b);\varphi}
\bigr]_{1/2}=\mathcal{Q}^{\sigma,\sigma/(2b);\varphi}.
\end{aligned}
\end{equation}
The last equality is due to the definition given by \eqref{25f10} for $s=\sigma\in E$.
Since $\chi_1=1$ in a neighbourhood of $\mathrm{supp}\,\chi$, we conclude by \eqref{16f65} that
$\Lambda(\chi u)\in \mathcal{H}^{\sigma,\sigma/(2b);\varphi}$ implies the inclusion
$\Lambda(\chi u)=\chi_1\Lambda(\chi u)\in \mathcal{Q}^{\sigma,\sigma/(2b);\varphi}$.
The required implication \eqref{16f61} is also proved in the $\sigma\in E$ case.

Now, according to formulas \eqref{16f60} and \eqref{16f61} and owing to Theorem~\ref{25th4.2}, we obtain the implication
\begin{align*}
&u\in\bigoplus_{k=1}^{N}
H^{s-\lambda+2b\varkappa_k,(s-\lambda+2b\varkappa_k)/(2b);\varphi}_{\mathrm{loc}}(\Omega_0,\Omega')\\
&\Longrightarrow
\chi u\in\bigoplus_{k=1}^{N}
H^{s-\lambda+1+2b\varkappa_k,(s-\lambda+1+2b\varkappa_k)/(2b);\varphi}(\Omega).
\end{align*}
Theorem~\ref{25th4.2} is applicable here because
\begin{equation*}
\chi u\in\bigoplus_{k=1}^{N}
H^{\sigma_0+2b\varkappa_k,(\sigma_0+2b\varkappa_k)/(2b)}(\Omega)
\end{equation*}
due to the hypothesis and because $s-\lambda+1>\sigma_{0}$. Since $\chi$ is any function of class  $C^\infty(\overline\Omega)$ and subject to $\mbox{supp}\,\chi\subset\Omega_0\cup\Omega'$, the last implication means required property \eqref{25f54}.

This property allows us to prove the theorem. Consider first the case where $s\notin\mathbb{Z}$. In this case, there exists an integer $\lambda_{0}\geq1$ such that
\begin{equation}\label{16f66}
s-\lambda_{0}<\sigma_{0}<s-\lambda_{0}+1.
\end{equation}
Using \eqref{25f54} successively for $\lambda:=\lambda_{0}$, $\lambda:=\lambda_{0}-1$,..., and $\lambda:=1$, we obtain the following chain of implications:
\begin{align*}
&u\in\bigoplus_{k=1}^{N}
H^{\sigma_0+2b\varkappa_k,(\sigma_0+2b\varkappa_k)/(2b)}(\Omega)\subset
\bigoplus_{k=1}^{N}
H^{s-\lambda_0+2b\varkappa_k,(s-\lambda_0+2b\varkappa_k)/(2b);\varphi}_{\mathrm{loc}}(\Omega_0,\Omega')
\\
&\Longrightarrow u\in\bigoplus_{k=1}^{N}
H^{s-\lambda_0+1+2b\varkappa_k,(s-\lambda_0+1+2b\varkappa_k)/(2b);\varphi}_
{\mathrm{loc}}(\Omega_0,\Omega')
\Longrightarrow\ldots\\
&\Longrightarrow
u\in\bigoplus_{k=1}^{N}
H^{s+2b\varkappa_k,(s+2b\varkappa_k)/(2b);\varphi}_{\mathrm{loc}}(\Omega_0,\Omega').
\end{align*}
Since $u$ satisfies \eqref{25f12a} by the hypothesis, the theorem is proved in this case.

Let us now consider the opposite case, when $s\in\mathbb{Z}$. No integer $\lambda_{0}$ satisfies \eqref{16f66} in this case. However, since $s-\varepsilon\notin\mathbb{Z}$ and $s-\varepsilon>\sigma_{0}$ whenever  $0<\varepsilon\ll1$, the inclusion
$$
u\in\bigoplus_{k=1}^{N}
H^{s-\varepsilon+2b\varkappa_k,(s-\varepsilon+2b\varkappa_k)/(2b);\varphi}_
{\mathrm{loc}}(\Omega_0,\Omega')
$$
holds true by what we have just proved. Hence, using \eqref{25f54} for $\lambda:=1$, we conclude that
\begin{align*}
&u\in\bigoplus_{k=1}^{N}
H^{s-\varepsilon+2b\varkappa_k,(s-\varepsilon+2b\varkappa_k)/(2b);\varphi}_
{\mathrm{loc}}(\Omega_0,\Omega')\subset
\bigoplus_{k=1}^{N}
H^{s-1+2b\varkappa_k,(s-1+2b\varkappa_k)/(2b);\varphi}_{\mathrm{loc}}(\Omega_0,\Omega')
\\
&\Longrightarrow
u\in\bigoplus_{k=1}^{N}
H^{s+2b\varkappa_k,(s+2b\varkappa_k)/(2b);\varphi}_{\mathrm{loc}}(\Omega_0,\Omega').
\end{align*}
The theorem is proved in the opposite case.
\end{proof}

\begin{proof}[Proof of Theorem~$\ref{25th4.4}$.]
Suppose that $p+2b\varkappa_k\geq0$ for certain $k\in\{1,...,N\}$. Let us first consider the case where the hypothesis $(f,g,h)\in\mathcal{Q}^{s,s/(2b);\varphi}$ is satisfied, with $s:=p+b+n/2$. By Theorem~\ref{25th4.2}, we have $u_k\in H^{\sigma,\sigma/(2b);\varphi}(\Omega)$ for $\sigma:=p+b+n/2+2b\varkappa_k$, with $\varphi$ obeying \eqref{9f4.7}. Let $w\in H^{\sigma,\sigma/(2b);\varphi}(\mathbb{R}^{n+1})$ be an extension of $u_k$. Proposition~\ref{prop-embedding}(i) states that the distribution $D_{x}^{\alpha}\partial_{t}^{\beta}w(x,t)$ is continuous on $\mathbb{R}^{n+1}$ whenever $0\leq|\alpha|+2b\beta\leq p+2b\varkappa_k$. Hence, its restriction
$D_{x}^{\alpha}\partial_{t}^{\beta}u_k(x,t)$ is continuous on $\overline\Omega$ for the same $\alpha$ and $\beta$, q.e.d.

Let us now consider the case where hypotheses \eqref{25f13}--\eqref{25f15} are satisfied. Owing to Theorem~\ref{25th4.3}, the inclusion $u_k\in H^{\sigma,\sigma/(2b);\varphi}_{\mathrm{loc}}(\Omega_0,\Omega')$ holds true for above-mentioned  $\sigma$ and $\varphi$. Let functions $\chi$ and $\eta$ be as in the proof of Theorem~\ref{25th4.3}. Then $\eta u_k\in H^{\sigma,\sigma/(2b);\varphi}(\Omega)$. Let $w\in H^{\sigma,\sigma/(2b);\varphi}(\mathbb{R}^{n+1})$ be an extension of $\eta u_k$. Proposition~\ref{prop-embedding}(i) asserts that the distribution $D_{x}^{\alpha}\partial_{t}^{\beta}w(x,t)$ is continuous on $\mathbb{R}^{n+1}$ whenever $0\leq|\alpha|+2b\beta\leq p+2b\varkappa_k$. Hence, its restriction
$D_{x}^{\alpha}\partial_{t}^{\beta}(\eta u_k)$ is continuous on $\overline\Omega$ for the same $\alpha$ and $\beta$. Thus, so is the distribution $\chi D_{x}^{\alpha}\partial_{t}^{\beta}u_k=\chi D_{x}^{\alpha}\partial_{t}^{\beta}(\eta u_k)$. The theorem is proved in view of Remark~\ref{rem-continuous-interpretation} and the arbitrariness of
the function $\chi\in C^\infty(\overline\Omega)$ subject to    $\mbox{supp}\,\chi\subset\Omega_0\cup\Omega'$.
\end{proof}

It remains to substantiate Remark~\ref{rem-sharp-condition}. Let $p\in\mathbb{Z}$, $s:=p+b+n/2>\sigma_0$ and $\varphi\in\mathcal{M}$, and assume that $p+2b\varkappa_k\geq0$ for certain $k\in\{1,...,N\}$. Suppose that implication \eqref{sharp-condition-implication-global} holds true for every function \eqref{25f12a}. We should prove that $\varphi$ satisfies \eqref{9f4.7}. To this end, we arbitrarily choose a function $w\in H^{s+2b\varkappa_k,(s+2b\varkappa_k)/(2b);\varphi}(\mathbb{R}^{n+1})$ such that $\mathrm{supp}\,w\subset\Omega$, let $u_{k}:=w\!\upharpoonright\!\Omega$ and $u:=(\delta_{1,k}\,u_{k},\ldots,\delta_{n,k}\,u_{k})$, and put $(f,g,h):=\Lambda u\in\mathcal{Q}^{s,s/(2b);\varphi}$. (As usual, $\delta_{j,k}$ is the Kronecker delta.) The vector-valued function $u$ satisfies inclusion $\eqref{25f12a}$ and the premise of implication \eqref{sharp-condition-implication-global}. Hence, $u$ satisfies the conclusion of Theorem~\ref{25th4.4}. Specifically, all generalized derivatives $(\partial^{j}u_{k})/\partial{x_{1}^{j}}$, with $0\leq j\leq p+2b\varkappa_k$, are continuous on $\overline{\Omega}$. Then the same  derivatives of $w$ are continuous on $\mathbb{R}^{n+1}$. Therefore, $\varphi$ satisfies \eqref{9f4.7} by Proposition~\ref{prop-embedding}(ii) (considered for $q:=p+2b\varkappa_k\geq0$, $\sigma:=s+2b\varkappa_k=q+b+n/2$, and $V:=\Omega$) and due to our choice of $w$. If we suppose that implication \eqref{sharp-condition-implication-local} (instead of \eqref{sharp-condition-implication-global}) holds true for every function \eqref{25f12a}, we just need to replace the condition $\mathrm{supp}\,w\subset\Omega$ with $\mathrm{supp}\,w\subset\Omega_{0}$ in this reasoning to substantiate Remark~\ref{rem-sharp-condition}.

\subsection*{Funding}

The second named author thanks the Isaac Newton Institute for Mathematical Sciences, Cambridge, for support and hospitality during the 'Solidarity Program' where the work on this paper was undertaken. This work was supported by 'EPSRC grant no. EP/R014604/1'. The author also thanks the Department of Mathematics, King’s College London, for their hospitality.
The third named author was funded by the National Academy of Sciences of Ukraine and supported by a grant from the Simons Foundation (SFI-PD-Ukraine-00014586, A.A.M.).

\subsection*{Data availability}

The authors confirm that there are no data associated with this paper.

\subsection*{Conflict of interest}

The authors have no relevant financial or non-financial interests to disclose.

\end{document}